\newtheorem{theorem}{рЕНПЕЛЮ}
\newtheorem{proposition}[theorem]{оПЕДКНФЕМХЕ}
\newtheorem{remark}[theorem]{гЮЛЕВЮМХЕ}
\newtheorem{definition}[theorem]{нОПЕДЕКЕМХЕ}
\begin{document}

\title{On some additive problems in number theory (v.2)}

\author{Andrei Allakhverdov}
\address{46 Sedykh str, flat 5, 220103, Minsk, Belarus}
\email{vandall@tut.by}

\begin{abstract}
In this article, we consider some problems in additive number theory.
Elementary solutions for the Goldbach-Euler conjecture and the twin primes conjecture are presented. The method employed also makes it possible to obtain some interesting results related to the densities of sequences and gaps between primes. A stronger version of the Goldbach-Euler conjecture, in which the strengthening is established by reducing the set of all primes to the set of twin prime pairs, is also presented.
The proof is based on the direct construction of the double sieve Eratosthenes-type and does not use empirical or heuristic methods.
\end{abstract}

\keywords{Primes; additive problems; Goldbach-Euler conjecture; twin prime conjecture; first Hardy-Littlewood conjecture; double sieve Eratosthenes-type; difference between primes; density of sequences.}

\subjclass[2010]{11A41, 11B05, 11B13, 11P32.}


\date{date}

\thanks{}

\maketitle


\hfill \begin{minipage}[h]{0.75\textwidth}
All results of the profoundest mathematical investigation must ultimately be expressible in the simple form of properties of integers.

 \begin{flushright}
Leopold Kronecker
\end{flushright}
 
Regarding the relative powers of elementary sieve methods and the analytical methods one usually considers that the latter should be more powerful... 
But history has shown that such views are not totally correct.

 \begin{flushright}
H.E. Richert, Lectures on Sieve Methods, \\
Tata Institute of Fundamental Research, 1976
\end{flushright}
 \end{minipage}

\section{нАНЯМНБЮМХЪ Х ЯНЦКЮЬЕМХЪ} 
дНЙЮГЮРЕКЭЯРБЮ НЯМНБЮМШ МЮ ЯБНИЯРБЮУ ЙКЮЯЯНБ БШВЕРНБ $\mathbf{Z}/6\mathbf{Z}$, Ю ХЛЕММН: 
ЙКЮЯЯШ БШВЕРНБ $\bar{1}_6$ Х $\bar{5}_6$ ЯНДЕПФЮР БЯЕ МЕВЕРМШЕ ОПНЯРШЕ ВХЯКЮ, ГЮ ХЯЙКЧВЕМХЕЛ ВХЯКЮ $3$; 
КЧАНИ ХГ ВЕРМШУ ЙКЮЯЯНБ БШВЕРНБ ОН ЛНДСКЧ $6$ ЛНФЕР АШРЭ ОПЕДЯРЮБКЕМ ЙЮЙ ЯСЛЛЮ ХКХ ПЮГМНЯРЭ ЙКЮЯЯНБ $\bar{1}_6$ Х/ХКХ $\bar{5}_6$;
ОНЯКЕДНБЮРЕКЭМНЯРХ, ЯНЯРЮБКЕММШЕ ХГ ЛМНФЕЯРБ $\bar{1}_6$ Х $\bar{5}_6$, СОНПЪДНВЕММШУ ОН БНГПЮЯРЮМХЧ ХКХ САШБЮМХЧ, УНПНЬН ЯРПСЙРСПХПНБЮМШ ОН ОПНЯРШЛ ВХЯКЮЛ. 
щРН ОНГБНКЪЕР ОНЯРПНХРЭ ДБНИМНЕ ПЕЬЕРН щПЮРНЯТЕМНБЮ РХОЮ.

оСЯРЭ ${P}$ -- ЛМНФЕЯРБН БЯЕУ ОПНЯРШУ ВХЯЕК. дЮКЕЕ ЛШ БЕГДЕ ОНКЮЦЮЕЛ $p \in \mathcal{P}$, ЦДЕ $\mathcal{P} = {P} \setminus \{ 2, 3\}$.

бШПЮФЕМХЕ $\# S_m$ НАНГМЮВЮЕР ВХЯКН МЕМСКЕБШУ ВКЕМНБ ОНЯКЕДНБЮРЕКЭМНЯРХ $S_m$.

\section{оПЕДБЮПХРЕКЭМШЕ ГЮЛЕВЮМХЪ}

\subsection{уНПНЬН ЯРПСЙРСПХПНБЮММШЕ ОНЯКЕДНБЮРЕКЭМНЯРХ}\label{primes_and_composite}

\begin{definition} 
{оНЯКЕДНБЮРЕКЭМНЯРЭ $S$ УНПНЬН ЯРПСЙРСПХПНБЮМЮ ОН ВХЯКС $q$, ЕЯКХ ХМДЕЙЯШ ВКЕМНБ ОНЯКЕДНБЮРЕКЭМНЯРХ $S$, ЙПЮРМШУ ВХЯКС $q$, НАПЮГСЧР ЮПХТЛЕРХВЕЯЙСЧ ОПНЦПЕЯЯХЧ Я ПЮГМНЯРЭЧ $q$.} 
\end{definition}

оСЯРЭ  $ A = \{ \, a_i \colon a_i = 6i-1, i \in \mathbf{N} \} $ Х ${ B} = \{ \, b_i \colon b_i = 6i+1, i \in \mathbf{N}  \} $ --  ДБЕ ОНЯКЕДНБЮРЕКЭМНЯРХ МЕВЕРМШУ ВХЯЕК:  
\tabcolsep=0.15em
 \begin{center}
\begin{tabular} {rrrrrrrrrrrrrrrrrrrrrr}
{${A}$  = } \{ &{5,} &{11,} &{17,} &{23,} &{29,} &{35,} &{41,} &{47,} &{53,} &{59,} &{65,} &{71,} &{77,} &{83,} &{89,} &{95,} &{101,} & \ldots \}, \\
{${B}$  = } \{ &{7,} &{13,} &{19,} &{25,} &{31,} &{37,} &{43,} &{49,} &{55,} &{61,} &{67,} &{73,} &{79,} &{85,} &{91,} &{97,} &{103,} & \ldots \}.     
\end{tabular}
\end{center} 
\begin{theorem}  
оНЯКЕДНБЮРЕКЭМНЯРХ $A$ Х $B$ УНПНЬН ЯРПСЙРСПХПНБЮМШ ОН БЯЕЛ ОПНЯРШЛ ВХЯКЮЛ $p \in \mathcal{P}$.
\end{theorem}   
\begin{proof}
EЯКХ $a_i$ ЯНЯРЮБМНЕ, РН МЮИДСРЯЪ РЮЙХЕ $j \not=0$, $ k \not= 0$, ВРН АСДЕР ХЛЕРЭ ЛЕЯРН ПЮБЕМЯРБН
\begin{align*} 
 { a_i=6i - 1 = a_j  b_k = (6j - 1)(6k + 1) = 36jk - 6k + 6j - 1}. 
\end{align*}
б ЩРНЛ ЯКСВЮЕ ЛШ ХЛЕЕЛ ДБЮ БШПЮФЕМХЪ ДКЪ $i$:
\begin{align*}
    i &= k\,(6j - 1) + j = k a_j +j, 	\\ 
    i &= j\,(6k + 1) - k = jb_k -k,        
  \end{align*}
ЙНРНПШЕ НОПЕДЕКЪЧР ДБЮ ЯЕЛЕИЯРБЮ ЮПХТЛЕРХВЕЯЙХУ ОПНЦПЕЯЯХИ:
\begin{align} 
	a_j \mid a_i & \Leftrightarrow i = + j + k a_j, \label{a_a_composite} \\ 
	b_k \mid a_i & \Leftrightarrow i = - k + j b_k, \label{a_b_composite}
	\end{align}	
еЯКХ $b_i$ ЯНЯРЮБМНЕ, РНЦДЮ МЮИДСРЯЪ РЮЙХЕ $j\not=0$, $ j'\not=0,$ ХКХ $ k\not=0, k'\not=0$ , ВРН АСДЕР ХЛЕРЭ ЛЕЯРН ОН ЙПЮИМЕИ ЛЕПЕ НДМН ХГ ПЮБЕМЯРБ
\begin{align*}
  b_i &= 6i + 1 = a_j a_{j'} = \left( {6j - 1} \right)\left( {6j'- 1} \right)  = 6\left( {6 j j' - j - j' } \right) + 1, \\  
  b_i &= 6i + 1 = b_k b_{k'} = \left( {6k + 1} \right)\left( {6k'+ 1} \right)  = 6\left( {6 k k' + k + k'} \right) + 1.   
\end{align*} 
б ЩРНЛ ЯКСВЮЕ РЮЙФЕ ХЛЕЕЛ ДБЮ БШПЮФЕМХЪ ДКЪ $i$:
\begin{align*}
   i &= j' ( 6j - 1 ) - j = j' a_j - j, 	\\   
   i &= k' ( 6k + 1 ) + k = k'b_k + k,            	
\end{align*}
ЙНРНПШЕ НОПЕДЕКЪЧР ДБЮ ЯЕЛЕИЯРБЮ ЮПХТЛЕРХВЕЯЙХУ ОПНЦПЕЯЯХИ: 	
	\begin{align}  
	a_j \mid b_i &\Leftrightarrow i = - j + j' a_j ,  \label{b_a_composite}	\\
	b_k \mid b_i &\Leftrightarrow i = + k + k' b_k.   \label{b_b_composite}	          
	\end{align}
хГ БШПЮФЕМХИ (\ref{a_a_composite}), (\ref{a_b_composite}), (\ref{b_a_composite}) Х (\ref{b_b_composite}) ЯКЕДСЕР, ВРН ЙЮФДЮЪ ХГ ОНЯКЕДНБЮРЕКЭМНЯРЕИ $A$ Х $B$ УНПНЬН ЯРПСЙРСПХПНБЮМЮ ЙЮЙ ОН БЯЕЛ ВХЯКЮЛ $a_j \in A$, РЮЙ Х ОН БЯЕЛ ВХЯКЮЛ $b_k \in B$. 
нЙНМВЮРЕКЭМН, СРБЕПФДЕМХЕ РЕНПЕЛШ ЯКЕДСЕР ХГ РНЦН, ВРН $A \cup B \supset  \mathcal P$.
\end{proof}

\subsection{оПНЖЕДСПЮ ОПНЯЕХБЮМХЪ} \label{sieve_method}рЮЙХЛ НАПЮГНЛ, ДКЪ КЧАНЦН $p \in \mathcal{P}$ Б ЙЮФДНИ ХГ ОНЯКЕДНБЮРЕКЭМНЯРЕИ ${A}$ Х ${B}$ ЯСЫЕЯРБСЕР ПНБМН НДМЮ ОНДОНЯКЕДНБЮРЕКЭМНЯРЭ, ВКЕМШ ЙНРНПНИ ЙПЮРМШ ОПНЯРНЛС $p$ Х ЙНРНПШЕ НАПЮГСЧР ЮПХТЛЕРХВЕЯЙСЧ ОПНЦПЕЯЯХЧ Я ПЮГМНЯРЭЧ  $6p$.   
хМДЕЙЯШ ВКЕМНБ ЩРНИ ОНДОНЯКЕДНБЮРЕКЭМНЯРХ НАПЮГСЧР ЮПХТЛЕРХВЕЯЙСЧ ОПНЦПЕЯЯХЧ Я ПЮГМНЯРЭЧ $p$.  

оПНЖЕДСПЮ ОПНЯЕХБЮМХЪ ОНЯКЕДНБЮРЕКЭМНЯРХ $S$ ОН ВХЯКС $p$ ЯНЯРНХР Б ГЮЛЕМЕ ЯНЯРЮБМШУ ВХЯЕК ЩРНИ ОНЯКЕДНБЮРЕКЭМНЯРХ, ЙПЮРМШУ ВХЯКС $p$ МСКЪЛХ.  
оНЯКЕДНБЮРЕКЭМНЯРЭ $S$, ОПНЯЕЪММСЧ ОН ВХЯКС $p$ ЛШ НАНГМЮВЮЕЛ ЯХЛБНКНЛ $ S {\setminus \lambda p}$.  
оНЯКЕДНБЮРЕКЭМНЯРЭ $S$, ОПНЯЕЪММСЧ ОН БЯЕЛ $p \in \mathcal P$ ЛШ НАНГМЮВЮЕЛ ЯХЛБНКНЛ $S {\setminus \lambda \mathcal P}$.

\subsection{оНЯКЕДНБЮРЕКЭМНЯРХ $\mathcal A$, $\mathcal B$, $\mathcal L$ Х $\mathcal R$} \label{sequences L and R}

оСЯРЭ $\mathcal A = A \setminus \lambda \mathcal P$ Х ОСЯРЭ $\mathcal B = B \setminus \lambda \mathcal P$.
нОПЕДЕКХЛ ОНЯКЕДНБЮРЕКЭМНЯРХ $\mathcal L$ Х $\mathcal R$  
ЯКЕДСЧЫХЛ НАПЮГНЛ: 
$ {\mathcal L} = \left\{ l_i \colon {l_i} = i \text{ ЕЯКХ } {a_i} \in \mathcal{P}; \right. \left. {l_i} = 0 \text{ ЕЯКХ } {a_i} \notin \mathcal{P} \right\}$ Х
$ {\mathcal R} = \left\{ r_i \colon {r_i} = i \text{ ЕЯКХ } {b_i} \in \mathcal{P}; {r_i} = 0  \text{  ЕЯКХ  } {b_i} \notin \mathcal{P} \right\},$
РН ЕЯРЭ  
\tabcolsep=0.15em
 \begin{center}
\begin{tabular} {rrrrrrrrrrrrrrrrrrrrrr}
{${\mathcal A}$  = } \{ &{5,} &{11,} &{17,} &{23,} &{29,} &{0,} &{41,} &{47,} &{53,} &{59,} &{0,}  &{71,} &{0,}  &{83,} &{89,} &{0,}  &{101,} &  \ldots \}, \\
\vspace{10pt}
{${\mathcal L}$  = } \{ &{1,} &{2,} &{3,} &{4,} &{5,} &{0,} &{7,} &{8,} &{9,} &{10,} &{0,}  &{12,} &{0,}  &{14,} &{15,} &{0,}  &{17,} &  \ldots \}, \\
{${\mathcal B}$  = } \{ &{7,} &{13,} &{19,} &{0,} &{31,} &{37,} &{43,} &{0,} &{0,} &{61,} &{67,} &{73,} &{79,} &{0,}  & {0,} &{97,} &{103,} & \ldots  \}, \\
{${\mathcal R}$  = } \{ &{1,} &{2,} &{3,} &{0,} &{5,} &{6,} &{7,} &{0,} &{0,} &{10,} &{11,} &{12,} &{13,} &{0,}  & {0,} &{16,} &{17,} & \ldots  \}.
\end{tabular}     
\end{center}
бЯЕ МЕМСКЕБШЕ ВКЕМШ ОНЯКЕДНБЮРЕКЭМНЯРЕИ ${\mathcal A}$ Х ${\mathcal B}$ ЪБКЪЧРЯЪ ОПНЯРШЛХ ВХЯКЮЛХ.
бЯЕ МЕМСКЕБШЕ ВКЕМШ ОНЯКЕДНБЮРЕКЭМНЯРЕИ ${\mathcal L}$ Х ${\mathcal R}$ ЪБКЪЧРЯЪ ХМДЕЙЯЮЛХ МЕМСКЕБШУ ВКЕМНБ ОНЯКЕДНБЮРЕКЭМНЯРЕИ ${\mathcal A}$ Х ${\mathcal B}$. 
нВЕБХДМН, ОНЯКЕДНБЮРЕКЭМНЯРХ $\mathcal L$ Х $\mathcal R$ МЮЯКЕДСЧР ЯРПСЙРСПС ОНЯКЕДНБЮРЕКЭМНЯРЕИ $\mathcal A$ Х $\mathcal B$ Б ЯЛШЯКЕ ПЮЯОПЕДЕКЕМХЪ МСКЕИ.  
хМДЕЙЯШ МСКЕБШУ ВКЕМНБ ОНЯКЕДНБЮРЕКЭМНЯРЕИ ${\mathcal A}$ Х ${\mathcal L}$ НОПЕДЕКЪЧРЯЪ ОПЮБШЛХ ВЮЯРЪЛХ БШПЮФЕМХИ (\ref{a_a_composite}) Х 
(\ref{a_b_composite}); 
ХМДЕЙЯШ МСКЕБШУ ВКЕМНБ ОНЯКЕДНБЮРЕКЭМНЯРЕИ ${\mathcal B}$ Х ${\mathcal R}$ НОПЕДЕКЪЧРЯЪ ОПЮБШЛХ ВЮЯРЪЛХ БШПЮФЕМХИ (\ref{b_a_composite}) Х 
(\ref{b_b_composite}). 

дЮКЕЕ НОПЕДЕКХЛ ОНЯКЕДНБЮРЕКЭМНЯРХ 
${\mathcal A}^{ m'} = \{  a_i^{ m'} \colon a_i^{ m'} =  a_{i + m'} \}$,
${\mathcal B}^{ m'} = \{  b_i^ { m'} \colon b_i^{ m'} =  b_{i + m'} \}$, 
ЪБКЪЧЫХЕЯЪ НЯРЮРЙЮЛХ ОНЯКЕДНБЮРЕКЭМНЯРЕИ ${\mathcal A}$ Х ${\mathcal B}$ ОНЯКЕ $m'$-ЦН ЩКЕЛЕМРЮ, МЮОПХЛЕП,
\tabcolsep=0.15em
 \begin{center}
\begin{tabular} {rrrrrrrrrrrrrrrrrrrrrr}
{${\mathcal A}^{\, 5}$ = } \{&{0,}&{41,}&{47,}&{53,}&{59,} &{0,} &{71,} &{0,} &{83,} &{89,} &{0,} &{101,}&{107,}&{113,} &{0,} &{0,}&{131,} &  \ldots \}, \\
{${\mathcal B}^{\, 6}$ = } \{&{43,} &{0,}&{0,}&{61,}&{67,} &{73,} &{79,} &{0,} &{0,} &{97,} &{103,} &{109,} &{0,} &{0,} &{127,} &{0,} &{139,} & \ldots \}.
\end{tabular}
\end{center} 
рЮЙХЛ ФЕ НАПЮГНЛ, ДКЪ ОНЯКЕДНБЮРЕКЭМНЯРЕИ ${\mathcal L}^{ m'}$ Х ${\mathcal R}^{ m'}$ АСДЕЛ ХЛЕРЭ
\tabcolsep=0.15em
 \begin{center}
\begin{tabular} {rrrrrrrrrrrrrrrrrrrrrr}
{${\mathcal L}^{\, 5}$ = } \{&{0,} &{7,} &{8,} &{9,}&{10,} &{0,} &{12,} &{0,} &{14,} &{15,} &{0,}  &{17,} &{18,} &{19,} &{0,} &{0,} &{22,} & \ldots \}, \\
{${\mathcal R}^{\, 6}$ = } \{&{7,} &{0,} &{0,} &{10,} &{11,} &{12,} &{13,} &{0,} & {0,} &{16,} &{17,} &{18,} &{0,} &{0,} &{21,} &{0,} &{23,} & \ldots \}.  
\end{tabular}
\end{center} 
б НАЫЕЛ ЯКСВЮЕ, БШПЮФЕМХЕ $S\{ g \}$ НАНГМЮВЮЕР ЙНМЕВМСЧ ХКХ АЕЯЙНМЕВМСЧ ОНЯКЕДНБЮРЕКЭМНЯРЭ, ВКЕМШ ЙНРНПНИ МЕЙНРНПШЛ НАПЮГНЛ НОПЕДЕКЕМШ.

\subsection{нРПЕГЙХ ОНЯКЕДНБЮРЕКЭМНЯРЕИ}\label{seg_of_seq}

оСЯРЭ $S_{m} = \{ s_i \}_{i=1}^{i=m}$ НАНГМЮВЮЕР МЮВЮКЭМШИ НРПЕГНЙ ДКХМШ $m$ ОНЯКЕДНБЮРЕКЭМНЯРХ $S$, МЮОПХЛЕП, 
\tabcolsep=0.3em
 \begin{center}
\begin{tabular} {rrrrrrrrrrrrrrrr}
{${\mathcal A}_{14}$ } &= ({5,} & {11,} & {17,} & {23,} & {29,} & {0,} & {41,} & {47,} & {53,} & {59,} & { 0,} & {71,} & { 0,} & {83}),   \\
{${\mathcal B}_{14}$ } &= ({7,} & {13,} & {19,} & { 0,} & {31,} & {37,} & {43,} & {0,} & {0,} & {61,} & {67,} & {73,} & {79,} & { 0}).   
\end{tabular}
\end{center} 
рЮЙХЕ НРПЕГЙХ ЛШ МЮГШБЮЕЛ \textit{ОПЪЛШЛХ} НРПЕГЙЮЛХ, Ю НРПЕГЙХ 
\tabcolsep=0.295em
 \begin{center}
\begin{tabular}{rrrrrrrrrrrrrrrr}
{${\mathcal A'}_{14}$  } &= ({83,} & { 0,} & {71,} & { 0,} & {59,} & {53,} & {47,} &{41,} &{0,} &{29,} &{23,} &{17,} &{11,} & {5}), \\
{${\mathcal B'}_{14}$  } &= ({ 0,} & {79,} & {73,} & {67,} & {61,} & {0,} & {0,} &{43,} &{37,} &{31,} &{0,} &{19,} &{13,} & {7}),  \\ 
\end{tabular}
\end{center} 
- \textit{НАПЮРМШЛХ} НРПЕГЙЮЛХ.
рЮЙХЛ ФЕ НАПЮГНЛ ЛШ НОПЕДЕКЪЕЛ НРПЕГЙХ ОНЯКЕДНБЮРЕКЭМНЯРЕИ ${\mathcal L}_{m}$, ${\mathcal R}_{m}$, $\mathcal L'_{m} $ Х $\mathcal R'_{m}$:
\tabcolsep=0.3em
 \begin{center}
\begin{tabular}{lrrrrrrrrrrrrrrrr}
{${\mathcal L}_{14} $} &= ({ 1,} & {2,} & {3,} & {4,} & {5,} & {0,} & {7,} & {8,} & {9,} & {10,} & { 0,} & {12,} & { 0,} & {14}),   \\
{${\mathcal R}_{14} $} &= ({ 1,} & {2,} & {3,} & {0,} & {5,} & {6,} & {7,} & {0,} & {0,} & {10,} & {11,} & {12,} & {13,} & { 0}), \\
{${\mathcal L'}_{14}$} &= ({14,} & { 0,} & {12,} & { 0,} & {10,} & {9,} & {8,} &{7,} &{0,} &{5,} &{4,} &{3,} &{2,} & {1}), \\
{${\mathcal R'}_{14}$} &= ({ 0,} & {13,} & {12,} & {11,} & {10,} & {0,} & {0,} &{7,} &{6,} &{5,} &{0,} &{3,} &{2,} & {1}). 
\end{tabular}
\end{center}

\subsection{тСМЙЖХЪ ЯВЕРЮ МЕМСКЕБШУ ЩКЕЛЕМРНБ}

оСЯРЭ $\pi (a,n)$ - ЙНКХВЕЯРБН ОПНЯРШУ БХДЮ $6i-1$ ЛЕМЭЬХУ $n$ Х ОСЯРЭ $\pi (b,n)$ - ЙНКХВЕЯРБН ОПНЯРШУ БХДЮ $6i+1$ ЛЕМЭЬХУ ХКХ ПЮБМШУ $n+1$. 
оНКЮЦЮЪ $n=6m$, АСДЕЛ ХЛЕРЭ
$$\begin{array}{*{20}l}
   {\pi }\left(a, n \right) = {{\pi }\left(a, 6m \right) = \# {\mathcal A}_{m} = \# {\mathcal L}_{m} = \sum\limits_{l_i  \leqslant m, \, l_i  \ne 0} 1 ,} 	\\
	 {\pi }\left(b, n \right) = {{\pi }\left(b, 6m \right) = \# {\mathcal B}_{m} = \# {\mathcal R}_{m} = \sum\limits_{r_i  \leqslant m, \, r_i  \ne 0} 1 .}  
 \end{array} $$

\subsection{вЕРМШЕ ВХЯКЮ} \label{Even numbers}

оСЯРЭ ${\mathcal G} = \{\mathrm g \}$ - ЛМНФЕЯРБН БЯЕУ ОНКНФХРЕКЭМШУ ВЕРМШУ ВХЯЕК.  
пЮГНАЗЕЛ ЛМНФЕЯРБН ${\mathcal G} \setminus \{2\}$ МЮ РПХ МЕОЕПЕЯЕЙЮЧЫХЕЯЪ ЛМНФЕЯРБЮ (ЙКЮЯЯШ БШВЕРНБ ОН ЛНДСКЧ $6$) Х АСДЕЛ ОНКЮЦЮРЭ, ВРН   
${\mathcal G^1} = \{ \mathrm g_m^1 : \mathrm g_m^1 = 6m - 2 \}$, 
${\mathcal G^2}=  \{ \mathrm g_m^2 : \mathrm g_m^2 = 6m \}$, 
${\mathcal G^3}=  \{ \mathrm g_m^3 : \mathrm g_m^3 = 6m + 2\}$. 
йЮФДНЕ ЖЕКНЕ ВХЯКН $m \geqslant 1$ НОПЕДЕКЪЕР РПХ ОНЯКЕДНБЮРЕКЭМШУ ВЕРМШУ ВХЯКЮ $\mathrm g_m^1$, $\mathrm g_m^2$, $\mathrm g_m^3$, ОН НДМНЛС ХГ ЙЮФДНЦН ЙКЮЯЯЮ.

\subsection{яСЛЛХПНБЮМХЕ НРПЕГЙНБ}\label{sum_of_segments}

яСЛЛС (ПЮГМНЯРЭ) ДБСУ ОНЯКЕДНБЮРЕКЭМНЯРЕИ $S' = \{s'_i\}$ Х $S'' = \{s''_i\}$ НОПЕДЕКХЛ ЙЮЙ ОНЯКЕДНБЮРЕКЭМНЯРЭ
$S = \{s_i: s_i = s'_i \pm s''_i  \text{ ЕЯКХ } s'_i s''_i \ne 0; s_i =0  \text{ ЕЯКХ } s'_i s''_i = 0 \}$. 
рЮЙХЛ ФЕ НАПЮГНЛ НОПЕДЕКХЛ ЯСЛЛС (ПЮГМНЯРЭ) ДБСУ НРПЕГЙНБ$S'_m = \{s'_i\}_{i=1}^{i=m}$ Х $S''_m = \{s''_i\}_{i=1}^{i=m}$ 
ЙЮЙ НРПЕГНЙ $S_m = \{s_i: s_i = s'_i \pm s''_i  \text{  ЕЯКХ  } s'_i s''_i \ne 0; s_i = 0  \text{  ЕЯКХ  } s'_i s''_i = 0 \}$.  

лШ ЯВХРЮЕЛ, ВРН ОНЯКЕДНБЮРЕКЭМНЯРЭ $S = S' + S''$ ОПНЯЕЪМЮ ОН $p$, ЕЯКХ НАЕ ОНЯКЕДНБЮРЕКЭМНЯРХ $S'$ Х $S''$ ОПНЯЕЪМШ ОН $p$,  
РН ЕЯРЭ $S \setminus \lambda p = S' \setminus \lambda p + S'' \setminus \lambda p$.

\subsection{аХМЮПМШЕ ЮДДХРХБМШЕ ГЮДЮВХ}\label{binary_problem}

\subsubsection{оЮПШ ОПНЯРШУ ВХЯЕК Я ТХЙЯХПНБЮММНИ ПЮГМНЯРЭЧ} \label{primes_with_fixed_gap}

оСЯРЭ ${\pi}_{{\mathrm g}}(n)$ -- ВХЯКН ОПНЯРШУ ВХЯЕК $p$, МЕ ОПЕБНЯУНДЪЫХУ $n$ Х РЮЙХУ, ВРН  $p' = p + \mathrm g$ РЮЙФЕ ОПНЯРНЕ.  
йЮФДШИ ХГ ЙКЮЯЯНБ ВЕРМШУ ВХЯЕК ЛНФЕР АШРЭ ОПЕДЯРЮБКЕМ Б БХДЕ ПЮГМНЯРХ ДБСУ МЕВЕРМШУ ХГ $A$ Х/ХКХ $B$ ЯРПНЦН НОПЕДЕКЕММШЛ НАПЮГНЛ, Ю ХЛЕММН  
$\mathrm g_{m'}^1=a_{i+m'}-b_i$, $\mathrm g_{m'}^2=(a_{i+m'}-a_i)=(b_{i+m'}-b_i)$, Х $\mathrm g_{m'}^3=b_{i+m'}-a_i$. 
щРХ РНФДЕЯРБЮ ОНГБНКЪЧР ОНЯРПНХРЭ ЙНМЯРПСЙЖХХ, ДЮЧЫХЕ ПЕЬЕМХЕ ГЮДЮВХ МЮУНФДЕМХЪ БЯЕУ РЮЙХУ ОЮП ДКЪ КЧАШУ  $\mathrm g > 2$ Х $n$:
\begin{align}
{\pi}_{{\mathrm g}^1_{m'}} (n+1) &= \# \left( {\mathcal A}^{m'}_{m}-{\mathcal B}_{m} \right) = \# \left( {\mathcal L}^{m'}_{m}-{\mathcal R}_{m} \right), \label{a-b} \\   
{\pi}_{{\mathrm g}^2_{m'}} (n+1) &= \# \left( {\mathcal A}^{m'}_{m}-{\mathcal A}_{m} \right) + \# \left( {\mathcal B}^{m'}_{m}-{\mathcal B}_{m} \right) \label{a-a}  \\  
	&= \# \left( {\mathcal L}^{m'}_{m} -  {\mathcal L}_{m} \right)  +  \# \left( {\mathcal R}^{m'}_{m} - {\mathcal R}_{m} \right), \nonumber  \\ 
  {\pi}_{{\mathrm g}^3_{m'}} (n) &= \# \left( {\mathcal B}^{m'}_{m} -{\mathcal A}_{m} \right)  = \# \left( {\mathcal R}^{m'}_{m}- {\mathcal L}_{m} \right). \label{b-a}
  \end{align}
гЮЛЕРХЛ, ВРН ${\pi}_{{\mathrm g}^1_{m'}} (n+1) - {\pi}_{{\mathrm g}^1_{m'}} (n) \leqslant 1$ Х ${\pi}_{{\mathrm g}^2_{m'}} (n+1) - {\pi}_{{\mathrm g}^2_{m'}} (n) \leqslant 1$.

\begin{remark} \label{rem_on_diff}
еЯКХ $3 \mid \mathrm g$, РН $\mathrm g$ ХЛЕЕР ОПЕДЯРЮБКЕМХЪ ДБСУ БХДНБ, ЙЮЙ ОНЙЮГЮМН Б (\ref{a-a}).
\end{remark}

мЮИДЕЛ, МЮОПХЛЕП, ВХЯКН ОЮП $p, p' = p + 28$, ЦДЕ $p \leqslant 126 + 1 $.  
лШ ХЛЕЕЛ $28 = \mathrm g_5^1 \in \mathcal G^1$, $m' = 5$ Х $m = 126/6 = 21$. оПХЛЕМЪЪ ЙНМЯРПСЙЖХЧ (\ref{a-b}), ОНКСВХЛ  
\tabcolsep=0.085em
\begin{center}
\begin{tabular}{rrrrrrrrrrrrrrrrrrrrrrrrrrr}
 ${\mathcal A}^5_{21} =$ &{0,}&{41,}&{47,}&{53,}&{59,}&{0,}&{71,}&{0,}&{83,}&{89,}&{0,}&{101,}&{107,}&{113,}&{0,}&{0,}&{131,}&{137,}&{0,}&{149,}&{0}. \\ 
   ${\mathcal B}_{21} =$ & {7,} &{13,}&{19,}&{0,}&{31,}&{37,}&{43,}&{0,}&{0,}&{61,}&{67,}&{73,}&{79,}&{0,}&{0,}&{97,}&{103,}&{109,}&{0,}& {0,}&{127}. \\  
 \hline 
$S_{21}\{28^-\} \setminus \lambda \mathcal P =$ &{0,}&{28,}&{28,}&{0,}&{28,}&{0,}&{28,}&{0,}&{0,}&{28,}&{0,}&{28,}&{28,}&{0,}&{0,}&{0,}&{28,}&{28,}&{0,}
                                               &{0,}&{0}.                                                        
\end{tabular}
\end{center}
яХЛБНК $28^-$ НАНГМЮВЮЕР, ВРН ВХЯКН $28$ ГДЕЯЭ ОПЕДЯРЮБКЕМН ЙЮЙ ПЮГМНЯРЭ ДБСУ ОПНЯРШУ ВХЯЕК.
йЮФДШИ МЕМСКЕБНИ ЩКЕЛЕМР НРПЕГЙЮ $({ S_{21} \{28^-\} \setminus \lambda \mathcal P })$ НОПЕДЕКЪЕР НДМН ХГ ОПЕДЯРЮБКЕМХИ ВХЯКЮ $\mathrm g_{5}^1 = 28$ Б БХДЕ ПЮГМНЯРХ ДБСУ ОПНЯРШУ.
рЮЙХЛ НАПЮГНЛ, $\pi_{28} (127) = \# ({S_{21} \{28^-\} \setminus \lambda \mathcal P }) = 9$. 

оНДЯРЮБХБ $m' = 5$ Х $m = 126/6 = 21$ Б ЙНМЯРПСЙЖХХ (\ref{a-a}) Х (\ref{b-a}), ЛШ ОНКСВХЛ ВХЯКН ОЮП ОПНЯРШУ Я ПЮГМНЯРЭЧ $30$ Х $32$ ЯННРБЕРЯРБЕММН.

\subsubsection{оПНЯРШЕ ВХЯКЮ АКХГМЕЖШ} \label{twinprimes}

нЯНАШЛ Х БЮФМЕИЬХЛ ЯКСВЮЕЛ ОПНЯРШУ Я ТХЙЯХПНБЮММНИ ПЮГМНЯРЭЧ ЪБКЪЧРЯЪ ОПНЯРШЕ-АКХГМЕЖШ.
хГ РНФДЕЯРБЮ $ b_{i}-a_i = 2 $ ОНКСВЮЕЛ ЙНМЯРПСЙЖХЧ
$${\pi}_2 (n)  = \# \left( {\mathcal B}_{m}  -  {\mathcal A}_{m} \right) = \# \left( {\mathcal R}_{m}  -  {\mathcal L}_{m} \right)$$ 
ДКЪ ВХЯКЮ ОЮП ОПНЯРШУ-АКХГМЕЖНБ $\pi_2 (n)$, МЕ ОПЕБНЯУНДЪЫХУ $n = 6m$.
оСЯРЭ $B - A = T$.
рЮЙ ЙЮЙ ${\mathcal A} = A {\setminus \lambda \mathcal P}$, ${\mathcal B} = B {\setminus \lambda \mathcal P}$, РН 
\tabcolsep=0.092em
\begin{center}
\begin{tabular}{rrrrrrrrrrrrrrrrrrrrrrrrrrr}
${{\mathcal B}}$ = &{7,}&{13,}&{19,}&{0,} &{31,}&{37,}&{43,}&{0,} &{0,} &{61,}&{67,}&{73,}&{79,}&{0,} &{0,} &{97,}&{103,}&{109,}&{0,}  &{0,}&{127, \ldots} \\ 
${{\mathcal A}}$ = &{5,}&{11,}&{17,}&{23,}&{29,}&{0,} &{41,}&{47,}&{53,}&{59,}&{0,} &{71,}&{0,} &{83,}&{89,}&{0,} &{101,}&{107,}&{113,}&{0,}&{0, \ldots}   \\ 
\hline
${{T} \setminus \lambda \mathcal P}$ = &{2,}&{2,}&{2,}&{0,}&{2,}&{0,}&{2,}&{0,}&{0,}&{2,}&{0,}&{2,}&{0,}&{0,}&{0,}&{0,}&{2,}&{2,} &{0,}  &{0,}&{0, \ldots}   
\end{tabular}
\end{center}
б ОНЯКЕДНБЮРЕКЭМНЯРХ ${{T} \setminus \lambda \mathcal P}$ ЙЮФДШИ ВКЕМ ПЮБМШИ $2$ СЙЮГШБЮЕР МЮ ОЮПС ОПНЯРШУ-АКХГМЕЖНБ.  %
рЕОЕПЭ НОПЕДЕКХЛ БЮФМСЧ ОНЯКЕДНБЮРЕКЭМНЯРЭ
$ {\mathcal T} = \left\{  t_i \colon {t_i} = i \text{  if  } {l_i}  {r_i} \ne 0 ; {t_i} = 0  \text{  if  } {l_i}  {r_i} = 0 \right\}$: 
\tabcolsep=0.094em
\begin{center}
\begin{tabular}{rrrrrrrrrrrrrrrrrrrrrrrrrrr}
${{\mathcal R}}= $ &{1,}&{2,}&{3,}&{0,}&{5,}&{6,}&{7,}&{0,}&{0,}&{10,}&{11,}&{12,}&{13,}&{0,}&{0,}&{16,}&{17,}&{18,}&{0,}&{0,}&{21,}&{0,}&{23,}&{0,}&{25 \ldots} \\ 
${{\mathcal L}} =$ &{1,}&{2,}&{3,}&{4,}&{5,}&{0,}&{7,}&{8,}&{9,}&{10,}&{0,}&{12,}&{0,}&{14,}&{15,}&{0,}&{17,}&{18,}&{19,}&{0,}&{0,}&{22,}&{23,}&{0,}&{25 \ldots} \\
\hline
${{\mathcal T}}= $ &{1,}&{2,}&{3,}&{0,}&{5,}&{0,}&{7,}&{0,}&{0,}&{10,}&{0,}&{12,}&{0,}&{0,}&{0,}&{0,}&{17,}&{18,}&{0,}&{0,}&{0,}&{0,}&{23,}&{0,}&{25 \ldots}  \\
\end{tabular}
\end{center}
гДЕЯЭ, ЕЯКХ $t_i \ne 0$, РНЦДЮ $6t_i \mp 1$ НАЮ ОПНЯРШЕ, РН ЕЯРЭ НАПЮГСЧР ОЮПС ОПНЯРШУ-АКХГМЕЖНБ.
кЕЦЙН БХДЕРЭ, ВРН ОНЯКЕДНБЮРЕКЭМНЯРЭ $\mathcal T$ МЮЯКЕДСЕР ЯРПСЙРСПС $T \setminus \lambda \mathcal P$ Б ЯЛШЯКЕ ПЮЯОПЕДЕКЕМХЪ МСКЕИ.

\subsubsection{оПЕДЯРЮБКЕМХЕ ВЕРМШУ ВХЯЕК Б БХДЕ ЯСЛЛШ ДБСУ ОПНЯРШУ}\label{reprofevennumber}

оСЯРЭ $\pi^+ (\mathrm g)$ ВХЯКН ОПЕДЯРЮБКЕМХИ ВЕРМНЦН ВХЯКЮ $\mathrm g$ Б БХДЕ ЯСЛЛШ ДБСУ ОПНЯРШУ.
йЮФДШИ ХГ ЙКЮЯЯНБ ВЕРМШУ ВХЯЕК ЛНФЕР АШРЭ ОПЕДЯРЮБКЕМ Б БХДЕ ЯСЛЛШ ДБСУ МЕВЕРМШУ ХГ $A$ Х/ХКХ $B$ ЯРПНЦН НОПЕДЕКЕММШЛ НАПЮГНЛ, Ю ХЛЕММН: 
$\mathrm g_{m+1}^1=a_i+a_{m-i+1}$, $\mathrm g_{m+1}^2=a_i+b_{m-i+1}$ Х $\mathrm g_{m+1}^3=b_j+b_{m-j+1}$. 
щРХ РНФДЕЯРБЮ ОНГБНКЪЧР ОНЯРПНХРЭ ЙНМЯРПСЙЖХХ, ДЮЧЫХЕ ПЕЬЕМХЕ ГЮДЮВХ МЮУНФДЕМХЪ БЯЕУ РЮЙХУ ОЮП ДКЪ БЯЕУ ВЕРМШУ ВХЯЕК $\mathrm g \geqslant 10$: 
\begin{align}
\pi^+ (\mathrm g^1_{m}) & \geqslant 0.5 \cdot \# \left( {\mathcal A}_{m-1} + {\mathcal A'}_{m-1} \right) = 0.5 \cdot \# \left( {\mathcal L}_{m-1} + {\mathcal L'}_{m-1} \right), \label{a+a} \\ 
\pi^+ (\mathrm g^2_{m}) & = \# \left( {\mathcal A}_{m-1}+{\mathcal B'}_{m-1} \right) = \# \left({\mathcal L}_{m-1} +{\mathcal R'}_{m-1} \right), \label{a+b} \\ 
    \pi^+ (\mathrm g^3_{m}) & \geqslant 0.5 \cdot \# \left( {\mathcal B}_{m-1} + {\mathcal B'}_{m-1} \right) = 0.5 \cdot \# \left( {\mathcal R}_{m-1} + {\mathcal R'}_{m-1} \right).  \label{b+b}  
\end{align}
\begin{remark} \label{rem_on_sum}
йНЩТТХЖХЕМР $0.5$ Б (\ref{a+a}) Х (\ref{b+b}) НАСЯКНБКЕМ РЕЛ, ВРН ЯСЛЛЮ \textit{ОПЪЛНЦН} Х \textit{НАПЮРМНЦН} НРПЕГЙНБ НДМНЦН Х РНЦН ФЕ ЙКЮЯЯЮ ДЮЕР НРПЕГНЙ, ЩКЕЛЕМРШ ЙНРНПНЦН, ПЮЯОНКНФЕММШЕ ЯХЛЛЕРПХВМН НРМНЯХРЕКЭМН ЖЕМРПЮ, НРКХВЮЧРЯЪ КХЬЭ ОНПЪДЙНЛ ЯКЮЦЮЕЛШУ. 
\end{remark}

мЮИДЕЛ, МЮОПХЛЕП, ВХЯКН ОПЕДЯРЮБКЕМХИ ВЕРМНЦН ВХЯКЮ $94$ Б БХДЕ ЯСЛЛШ ДБСУ ОПНЯРШУ.
гДЕЯЭ ЛШ ХЛЕЕЛ $94 = \mathrm g_{16}^1 \in \mathcal G^1$, $m = 16$. оПХЛЕМЪЪ ЙНМЯРПСЙЖХЧ (\ref{a+a}), ОНКСВХЛ
\tabcolsep=0.32em
\begin{center}
\begin{tabular}{rrrrrrrrrrrrrrrrr}
  ${\mathcal A}_{15}  $  =  & {5,}  & {11,} & {17,} & {23,} & {29,} &{0,}  &{41,} &{47,} &{53,} &{59,} &{0,}  &{71,} &{0,}  &{83,}  &{89.}   \\
  ${\mathcal A'}_{15} $  =  & {89,} &{83,}  & {0,}  & {71,} & {0,}  &{59,} &{53,} &{47,} &{41,} &{0,}  &{29,} &{23,} &{17,} &{11,}  &{5.}   \\
\hline
  $S_{15}\{94^+\} \setminus \lambda \mathcal P$  = &{94,} &{94,} &{0,} &{94,}  &{0,}  &{0,}  &{94,} &{94,} &{94,}  &{0,} &{0,}  &{94,} &{0,}  &{94,} &{94.}					
\end{tabular}
\end{center}
оНКСВЕММНЕ ГМЮВЕМХЕ $0.5 \cdot \# (S_{15}\{94^+\} \setminus \lambda \mathcal P)  = 4.5$ Б РН БПЕЛЪ ЙЮЙ ХЯРХММНЕ ГМЮВЕМХЕ $\pi^+ (94) = 5 $. 
щРНР ФЕ ПЕГСКЭРЮР ЛШ ОНКСВХЛ ОПХЛЕМЪЪ ОНЯКЕДМЕЕ БШПЮФЕМХЕ ХГ (\ref{a+a}): 
\tabcolsep=0.38em
\begin{center}
\begin{tabular}{rrrrrrrrrrrrrrrrr}
     ${\mathcal L}_{15}  $ =  &{1,}  &{2,}  &{3,} &{4,}  &{5,} &{0,}  &{7,} &{8,}  &{9,}  &{10,} &{0,} &{12,} &{0,} &{14,} &{15.} \\
     ${\mathcal L'}_{15} $ =  &{15,} &{14,} &{0,} &{12,} &{0,} &{10,} &{9,} &{8,}  &{7,}  &{0,}  &{5,} &{4,}  &{3,} &{2,}  &{1.}   \\
\hline
  $S_{15}\{16^+\} \setminus \lambda \mathcal P$  =&{16,} &{16,} &{0,} &{16,} &{0,} &{0,} &{16,} &{16,} &{16,} &{0,}  &{0,} &{16,} &{0,} &{16,} &{16.}
\end{tabular}
\end{center}

йНМЯРПСЙЖХХ (\ref{a+b}) Х (\ref{b+b}) ОПХ $m = 16$ ДЮДСР ЯННРБЕРЯРБЕММН ВХЯКН ОПЕДЯРЮБКЕМХИ ВЕРМШУ ВХЯЕК $96$ and $98$ Б БХДЕ ЯСЛЛШ ДБСУ ОПНЯРШУ.  
\begin{remark}\label{rem_on_density}
хГ ЯСЫЕЯРБНБЮМХЪ ОПЕДЯРЮБКЕМХЪ Б БХДЕ ЯСЛЛШ ДБСУ ОПНЯРШУ ДКЪ ЙЮФДНЦН ХГ РПЕУ ОНЯКЕДНБЮРЕКЭМШУ ВЕРМШУ ВХЯЕК $\mathrm g_{m}^1$, $\mathrm g_{m}^2$ Х $\mathrm g_{m}^3$ ЯКЕДСЕР, ВРН ХЛЕЕРЯЪ ОН ЙПЮИМЕИ ЛЕПЕ НДМН ДКЪ ЙЮФДНЦН ХГ РПЕУ ОПЕДЯРЮБКЕМХИ ВХЯКЮ $m$ Б БХДЕ $m=l+l$, $m=l+r$ Х $m=r+r$.
\end{remark}

\subsection{дБНИМНЕ ПЕЬЕРН}\label{double_sieving} 

б ОЮПЮЦПЮТЮУ \ref{primes_and_composite}, \ref{sieve_method}, \ref{sequences L and R} ЛШ ОНЙЮГЮКХ, ВРН ЙЮФДЮЪ ХГ ОНЯКЕДНБЮРЕКЭМНЯРЕИ $A$ Х $B$ УНПНЬН ЯРПСЙРСПХПНБЮМЮ ОН БЯЕЛ ОПНЯРШЛ $p \in \mathcal{P}$.
рЮЙХЛ НАПЮГНЛ, ДКЪ КЧАНЦН $p \in \mathcal{P}$ Б ЙЮФДНИ ХГ ОНЯКЕДНБЮРЕКЭМНЯРЕИ ${\mathcal A}$, ${\mathcal B}$, ${\mathcal L}$ Х ${\mathcal R}$ МЮИДЕРЯЪ НДМЮ Х РНКЭЙН НДМЮ АЕЯЙНМЕВМЮЪ ОНДОНЯКЕДНБЮРЕКЭМНЯРЭ МСКЕБШУ ВКЕМНБ, ХМДЕЙЯШ ЙНРНПШУ НАПЮГСЧР ЮПХТЛЕРХВЕЯЙСЧ ОПНЦПЕЯЯХЧ Я ПЮГМНЯРЭЧ $p$.  
дКЪ ЙЮФДНЦН $p \in \mathcal{P}$ ЛШ ХЛЕЕЛ
\begin{align*}
\# (A_{m} {\setminus \lambda p}) = &\left| \left\{ a_i \colon i \leqslant m, \left( p, a_i \right) = 1  \right\}  \right| \sim m \left( 1 - 1/p \right), \\
\# (B_{m} {\setminus \lambda p}) = &\left| \left\{ b_i \colon i \leqslant m, \left( p, b_i \right) = 1  \right\}  \right| \sim m \left( 1 - 1/p \right).
\end{align*}
\begin{definition} 
оНЯКЕДНБЮРЕКЭМНЯРЭ $S$ ДБЮФДШ ОПНЯЕЪМЮ ОН ОПНЯРНЛС ВХЯКС $p$, ЕЯКХ НМЮ ЯНДЕПФХР ДБЕ ПЮГКХВМШЕ (МЕЯНБОЮДЮЧЫХЕ) ОНДОНЯКЕДНБЮРЕКЭМНЯРХ МСКЕБШУ ВКЕМНБ, ХМДЕЙЯШ ЙНРНПШУ НАПЮГСЧР ЮПХТЛЕРХВЕЯЙХЕ ОПНЦПЕЯЯХХ Я ПЮГМНЯРЭЧ $p$.
\end{definition} 
\begin{definition} 
мЮ ОНЯКЕДНБЮРЕКЭМНЯРХ $S$ ПЕЮКХГНБЮМН ДБНИМНЕ ПЕЬЕРН, ЕЯКХ ОНЯКЕДНБЮРЕКЭМНЯРЭ $S$ ДБЮФДШ ОПНЯЕЪМЮ ОН БЯЕЛ $p\in \mathcal P$; 
\end{definition} 
\begin{definition} 
мЮ ОНЯКЕДНБЮРЕКЭМНЯРХ $S$ ПЕЮКХГНБЮМН ДБНИМНЕ ПЕЬЕРН, ЕЯКХ ОНЯКЕДНБЮРЕКЭМНЯРЭ $S$ ДБЮФДШ ОПНЯЕЪМЮ ОН БЯЕЛ $p \in \mathcal P$, ЙПНЛЕ ЙНМЕВМНЦН ХУ ВХЯКЮ, ОН ЙНРНПШЛ ОНЯКЕДНБЮРЕКЭМНЯРЭ $S$ ОПНЯЕЪМЮ НДМНЙПЮРМН (НАЫХИ ЯКСВЮИ).  
\end{definition} 

\begin{theorem} 
оНЯКЕДНБЮРЕКЭМНЯРХ ${{T} \setminus \lambda \mathcal P}$ Х $\mathcal T$ ЕЯРЭ ПЕЮКХГЮЖХХ ДБНИМНЦН ПЕЬЕРЮ.
\end{theorem} 

\begin{proof}
оНЯКЕДНБЮРЕКЭМНЯРХ ${{T} \setminus \lambda \mathcal P}$ Х $\mathcal T$ ХЛЕЧР НДХМЮЙНБСЧ ЯРПСЙРСПС Б ЯЛШЯКЕ ПЮЯОПЕДЕКЕМХЪ МСКЕБШУ ВКЕМНБ.
пЮЯЯЛНРПХЛ ЯРПСЙРСПС ОНЯКЕДНБЮРЕКЭМНЯРХ ${{T} \setminus \lambda \mathcal P}$:
\tabcolsep=0.092em
\begin{center}
\begin{tabular}{rrrrrrrrrrrrrrrrrrrrrrrrrrr}
${{\mathcal B}}$ = &{7,}&{13,}&{19,}&{0,} &{31,}&{37,}&{43,}&{0,} &{0,} &{61,}&{67,}&{73,}&{79,}&{0,} &{0,} &{97,}&{103,}&{109,}&{0,}  &{0,}&{127, \ldots} \\ 
${{\mathcal A}}$ = &{5,}&{11,}&{17,}&{23,}&{29,}&{0,} &{41,}&{47,}&{53,}&{59,}&{0,} &{71,}&{0,} &{83,}&{89,}&{0,} &{101,}&{107,}&{113,}&{0,}&{0, \ldots}   \\ 
\hline
${{T} \setminus \lambda \mathcal P}$ = &{2,}&{2,}&{2,}&{0,}&{2,}&{0,}&{2,}&{0,}&{0,}&{2,}&{0,}&{2,}&{0,}&{0,}&{0,}&{0,}&{2,}&{2,}&{0,}&{0,}&{0, \ldots}   
\end{tabular}
\end{center}
вКЕМШ НАЕХУ ОНЯКЕДНБЮРЕКЭМНЯРЕИ ${\mathcal A}$ Х ${\mathcal B}$, ОПННАПЮГЮЛХ ЙНРНПШУ ЪБКЪЧРЯЪ ЯНЯРЮБМШЕ ВХЯКЮ Х ЙНРНПШЕ НАНГМЮВЕМШ ЙЮЙ $0$, НРНАПЮФЮЧРЯЪ МЮ ${{T} \setminus \lambda \mathcal P}$ РЮЙФЕ ЙЮЙ $0$.
рЮЙХЛ НАПЮГНЛ, ХМДЕЙЯШ МСКЕБШУ ВКЕМНБ ОНЯКЕДНБЮРЕКЭМНЯРХ ${{T} \setminus \lambda \mathcal P}$ АСДСР НОПЕДЕКЪРЭЯЪ БЯЕЛХ БШПЮФЕМХЪЛХ (\ref{a_a_composite}), (\ref{a_b_composite}), (\ref{b_a_composite}) Х (\ref{b_b_composite}).
нАЗЕДХМЪЪ (\ref{a_a_composite}) Я (\ref{b_a_composite}) Х (\ref{a_b_composite}) Я (\ref{b_b_composite}), ЛШ ОНКСВХЛ ВЕРШПЕ ЯЕЛЕИЯРБЮ ЮПХТЛЕРХВЕЯЙХУ ОПНЦПЕЯЯХИ, $\lambda \in  \textbf{Z}^+$,
\begin{align} 
	 i =   \lambda a_j \pm j , \label{a_a_composite_2} \\ 
	 i =   \lambda b_k \pm k . \label{b_b_composite_2}  
	\end{align}
хГ БШПЮФЕМХЪ (\ref{a_a_composite_2}) ЯКЕДСЕР, ВРН Б ОНЯКЕДНБЮРЕКЭМНЯРХ ${{T} \setminus \lambda \mathcal P}$ ДКЪ ЙЮФДНЦН $p \in A$ ХЛЕЕРЯЪ ДБЕ ПЮГКХВМШЕ ОНДОНЯКЕДНБЮРЕКЭМНЯРХ МСКЕБШУ ВКЕМНБ, ХМДЕЙЯШ ЙНРНПШУ НАПЮГСЧР ЮПХТЛЕРХВЕЯЙХЕ ОПНЦПЕЯЯХХ Я ПЮГМНЯРЭЧ $p$.
хГ БШПЮФЕМХЪ (\ref{b_b_composite_2}) ЯКЕДСЕР, ВРН Б ОНЯКЕДНБЮРЕКЭМНЯРХ ${{T} \setminus \lambda \mathcal P}$ ДКЪ ЙЮФДНЦН $p \in B$ ХЛЕЕРЯЪ ДБЕ ПЮГКХВМШЕ ОНДОНЯКЕДНБЮРЕКЭМНЯРХ МСКЕБШУ ВКЕМНБ, ХМДЕЙЯШ ЙНРНПШУ НАПЮГСЧР ЮПХТЛЕРХВЕЯЙХЕ ОПНЦПЕЯЯХХ Я ПЮГМНЯРЭЧ $p$.
нЙНМВЮРЕКЭМН, СРБЕПФДЕМХЕ РЕНПЕЛШ ЯКЕДСЕР ХГ РНЦН, ВРН $A \cup B \supset  \mathcal P$.
\end{proof}
рЮЙХЛ НАПЮГНЛ, ЯСЛЛЮ ОНЯКЕДНБЮРЕКЭМНЯРЕИ $\mathcal A$ Х $\mathcal B$, Ю РЮЙФЕ, ЯСЛЛЮ ОНЯКЕДНБЮРЕКЭМНЯРЕИ  ${\mathcal L}$ Х ${\mathcal R}$ ЪБКЪЧРЯЪ ПЕЮКХГЮЖХЪЛХ ДБНИМНЦН ПЕЬЕРЮ.
дКЪ КЧАНЦН $p \in \mathcal P$ АСДЕЛ ХЛЕРЭ
\begin{align*}
\# ({T_m \setminus \lambda p}) \sim m \left( 1 - 2/p \right).	 
\end{align*}

\subsubsection{нАЫХИ ЯКСВЮИ ДБНИМНЦН ПЕЬЕРЮ}\label{general_sieving}
бЯЕ АХМЮПМШЕ ЙНМЯРПСЙЖХХ, ПЮЯЯЛНРПЕММШЕ Б \ref{primes_with_fixed_gap}, \ref{twinprimes} Х \ref{reprofevennumber} ХЛЕЧР НАЫХИ БХД $S'_{m} \pm S''_{m}={S}_{m} = \{ \mathrm g \}_{i=1}^{i=m}$ Х НЯМНБЮМШ МЮ РНФДЕЯРБЕ $s'_i \pm s''_i = s_i =\mathrm g$. 
нРЯЧДЮ, ЕЯКХ $q \mid  \mathrm g$, РН $q \mid s'_i$ РНЦДЮ Х РНКЭЙН РНЦДЮ, ЙНЦДЮ $q \mid s''_i$.
рЮЙХЛ НАПЮГНЛ, НАЮ ЩКЕЛЕМРЮ $s'_i$ Х $s''_i$ НРНАПЮФЮЧРЯЪ Б НДХМ Х РНР ФЕ ЩКЕЛЕМР $s_i$.
оСЯРЭ $p, q \in \mathcal{P}$ Х ОСЯРЭ $q \mid  \mathrm g$ Х $p \nmid  \mathrm g$; РНЦДЮ
\begin{align}
\# ({S_m \setminus \lambda q}) \sim  m \left( 1 - 1/q \right), \label{one_sieving} \\ 
\# ({S_m \setminus \lambda p}) \sim  m \left( 1 - 2/p \right). \label{two_sieving} 
	\end{align}
дЮДХЛ ДБЮ ОПХЛЕПЮ ДКЪ ХККЧЯРПЮЖХХ ЩРХУ СРБЕПФДЕМХИ. 
пЮЯЯЛНРПХЛ ЙНМЯРПСЙЖХЧ $\{ 28^{-} \} = A^5 - B $. пЮГМНЯРЭ ЛЕФДС ЯННРБЕРЯРБСЧЫХЛХ ВКЕМЮЛХ ОНЯКЕДНБЮРЕКЭМНЯРЕИ $A^5$ Х $B$ ПЮБМЮЪ $\mathrm g=28$ ХЛЕЕР ЕДХМЯРБЕММШИ ОПНЯРНИ ДЕКХРЕКЭ $p=7$.
оПНЯЕЪБ ЩРС ЙНМЯРПСЙЖХЧ ОН ОПНЯРНЛС ВХЯКС $p=7$, $7 \mid 28$, ЛШ ОНКСВХЛ

\tabcolsep=0.08em
\begin{center}
\begin{tabular}{rrrrrrrrrrrrrrrrrrrrrrrrrrr}
${A}^5 \setminus \lambda 7 =$ &{0,}&{41,}&{47,}&{53,}&{59,}&{65,} &{71,}&{0,}&{83,}&{89,}&{95,}&{101,}&{107,}&{113,}&{0,}&{125,}&{131,}&{137,}&{143, \ldots} \\ 
${B}\setminus \lambda 7 =$  &{7,}&{13,}&{19,}&{25,}&{31,}&{37,}&{43,}&{0,}&{55,}&{61,}&{67,}&{73,}&{79,}&{85,}&{0,}&{97,}&{103,}&{109,}&{115, \ldots}  \\ 
\hline 
${S\{ 28^- \}} \setminus \lambda 7 =$ &{0,}&{28,}&{28,}&{28,}&{28,}&{28,}&{28,}&{0,}&{28,}&{28,}&{28,}&{28,}&{28,}&{28,}&{0,}&{28,}&{28,}&{28,}&{28, \ldots}   
\end{tabular}
\end{center}
ЦДЕ Б ЯННРБЕРЯРБХЕ Я (\ref{one_sieving}) Б ОНЯКЕДНБЮРЕКЭМНЯРХ $S\{ 28^- \setminus \lambda 7\}$ БШПЕГЮМ НДХМ ХГ ЙЮФДШУ ЯЕЛХ ВКЕМНБ. 
оПНЯЕЪБ ЩРС ФЕ ЙНМЯРПСЙЖХЧ ОН ОПНЯРНЛС ВХЯКС $p=5$, $5 \nmid 28$, ОНКСВХЛ
\tabcolsep=0.078em
\begin{center}
\begin{tabular}{rrrrrrrrrrrrrrrrrrrr}
${A}^5 \setminus \lambda 5 =$ &{0,}&{41,}&{47,}&{53,}&{59,}&{0,}&{71,}&{77,}&{83,}&{89,}&{0,}&{101,}&{107,}&{113,}&{119,}&{0,}&{131,}&{137,}&{143, \ldots} \\ 
${B} \setminus \lambda 5 =$ & {7,}&{13,}&{19,}&{0,}&{31,}&{37,}&{43,}&{49,}&{0,}&{61,}&{67,}&{73,}&{79,}&{0,}&{91,}&{97,}&{103,}&{109,}&{0, \ldots} \\ 
\hline 
${S\{ 28^- \}} \setminus \lambda 5=$ &{0,}&{28,}&{28,}&{0,}&{28,}&{0,}&{28,}&{28,}&{0,}&{28,}&{0,}&{28,}&{28,}&{0,}&{28,}&{0,}&{28,}&{28,}&{0, \ldots}   
\end{tabular}
\end{center} 
ЦДЕ Б ЯННРБЕРЯРБХЕ Я (\ref{two_sieving}) Б ОНЯКЕДНБЮРЕКЭМНЯРХ $S\{ 28^-  \setminus \lambda 5 \}$ БШПЕГЮМШ ДБЮ ХГ ЙЮФДШУ ОЪРХ ВКЕМНБ. 
рЮЙНИ ФЕ ПЕГСКЭРЮР ЛШ ОНКСВХЛ ДКЪ КЧАНЦН $p \in  \mathcal P$, $p \nmid 28$.

оПЕДОНКНФХЛ, ВРН ОПХ $n \to \infty$ ЯСЫЕЯРБСЕР НЖЕМЙЮ БХДЮ
  \begin{align*} 
   {\pi}_{2} (n)  \sim  mC \prod \limits_{5 \leqslant p \leqslant n} \left( {1 - \frac{2}{p}} \right). 
 \end{align*}
еЯКХ МХ НДМН $p \in  \mathcal P$ МЕ ДЕКХР $\mathrm g$ Х $3 \nmid  \mathrm g$ (РН ЕЯРЭ ДКЪ КЧАНЦН $\mathrm g = 2^{\nu}$), АСДЕЛ ХЛЕРЭ ОПХ $n \to \infty$ 
\begin{align*} 
   {\pi}_{\mathrm g} (n)  \sim  mC \prod \limits_{5 \leqslant p \leqslant n} \left( {1 - \frac{2}{p}} \right) 
	                       \prod \limits_{ n \leqslant p \leqslant n + \mathrm g } \left( {1 - \frac{1}{p}} \right). 
 \end{align*}
еЯКХ $p_k \mid  \mathrm g$, $p_k \in  \mathcal P$ Х $3 \nmid  \mathrm g$ (РН ЕЯРЭ, ЕЯКХ $\mathrm g = p_k 2^{\nu}$, $p_k \ll n$), РН  
\begin{align*} 
   {\pi}_{\mathrm g} (n)  \sim  mC \; \frac{p_k-1}{p_k-2}  \prod \limits_{5 \leqslant p \leqslant n} \left( {1 - \frac{2}{p}} \right) 
	                       \prod \limits_{ n \leqslant p \leqslant n + \mathrm g } \left( {1 - \frac{1}{p}} \right). 
 \end{align*}
б НАЫЕЛ ЯКСВЮЕ, ДКЪ ОПНХГБНКЭМНЦН ВЕРМНЦН $\mathrm g$ АСДЕЛ ХЛЕРЭ ОПХ $n \to \infty$
\begin{align}  \label{first_HLC} 
   {\pi}_{\mathrm g} (n)  \sim  m C  
	\kappa \prod_{\substack { p \mid \mathrm g}, p \in  \mathcal P}   \left(\frac{p - 1} {p-2} \right) 
	\prod \limits_{5 \leqslant p \leqslant n} \left( {1 - \frac{2}{p}} \right) 
	  \prod \limits_{ n \leqslant p \leqslant n + \mathrm g } \left( {1 - \frac{1}{p}} \right). 
 \end{align}
гДЕЯЭ, Б ЯННРБЕРЯРБХЕ Я гЮЛЕВЮМХЕЛ \ref{rem_on_diff} Й ОЮПЮЦПЮТС \ref{primes_with_fixed_gap}, ЯКЕДСЕР ОНКНФХРЭ $\kappa = 1$ ЕЯКХ $3 \nmid {\mathrm g}$ Х $\kappa = 2$ ЕЯКХ $3 \mid{\mathrm g}$. оНЯКЕДМХИ ЛМНФХРЕКЭ Б (\ref{first_HLC}) ОПХ КЧАНЛ ЙНМЕВМНЛ $\mathrm g$ ЯРПЕЛХРЯЪ Й $1$. рЮЙХЛ НАПЮГНЛ, ОПХ $n \to \infty$  ЛНФМН ЯВХРЮРЭ ОНЯРНЪММШЛ НРМНЬЕМХЕ
\begin{align*}  
    \eta_2 (\mathrm g) 
		= 	\frac {{\pi}_{\mathrm g} (n)} {{\pi}_2 (n)} 
		= \kappa \prod_{\substack { p \mid \mathrm g}, p \in  \mathcal P}   \left(\frac{p - 1} {p} : \frac{ p-2} {p}\right) 
		=  \kappa  \prod_{\substack { p \mid \mathrm g }, p \in  \mathcal P} {\frac{p -1} {p -2}}.   
\end{align*} 

б ОЮПЮЦПЮТЕ \ref{Eul_Gol_conj} ЛШ ОНЙЮФЕЛ, ВРН ВХЯКН ОПЕДЯРЮБКЕМХИ ВЕРМНЦН ВХЯКЮ $\mathrm g$ Б БХДЕ ЯСЛЛШ ДБСУ ОПНЯРШУ ЛНФЕР АШРЭ БШПЮФЕМН ВЕПЕГ 
${\pi}_2 (\mathrm g)$.
б ЩРНЛ ЯКСВЮЕ, Б ЯННРБЕРЯРБХЕ Я гЮЛЕВЮМХЕЛ \ref{rem_on_sum} Й ОЮПЮЦПЮТС \ref{reprofevennumber}, МСФМН ОНКНФХРЭ $\kappa = 0.5$ ЕЯКХ $3 \nmid {\mathrm g}$ Х $\kappa = 1$ ЕЯКХ $3 \mid {\mathrm g}$. 
  
щРХ ПЮЯЯСФДЕМХЪ ЛНФМН НАНАЫХРЭ МЮ ЯКСВЮИ ЯСЛЛШ КЧАНЦН ВХЯКЮ ОНЯКЕДНБЮРЕКЭМНЯРЕИ.
яСЛЛЮ $k$ РЮЙХУ, ЙЮЙ ${\mathcal A}$, ${\mathcal B}$, ${\mathcal A}^{m'}$ Х ${\mathcal B}^{m'}$ ОНЯКЕДНБЮРЕКЭМНЯРЕИ ЯНДЕПФХР МЕ ЛЕМЕЕ НДМНИ Х МЕ АНКЕЕ, ВЕЛ $k$
АЕЯЙНМЕВМШУ ОНДОНЯКЕДНБЮРЕКЭМНЯРЕИ МСКЕБШУ ВКЕМНБ, ХМДЕЙЯШ ЙНРНПШУ НАПЮГСЧР ЮПХТЛЕРХВЕЯЙХЕ ОПНЦПЕЯЯХХ Я ПЮГМНЯРЭЧ $p$ ДКЪ БЯЕУ $p \in \mathcal P$.
мЮОПХЛЕП, ЙНМЯРПСЙЖХЪ ${{\mathcal T}^{m'}} - {\mathcal T} = ({\mathcal R^{m'}} - {\mathcal L^{m'}}) - ({\mathcal R} - {\mathcal L})$ ДЮЕР ВХЯКН ТНПЛ, ЯНЯРНЪЫХУ ХГ ДБСУ ОЮП ОПНЯРШУ-АКХГМЕЖНБ Я ПЮГМНЯРЭЧ, ПЮБМНИ $6m'$. 
гДЕЯЭ ЛШ ХЛЕЕЛ $k=4$. 
оНДПНАМЕЕ ЩРЮ ЙНМЯРПСЙЖХЪ АСДЕР ПЮЯЯЛНРПЕМЮ Б О. \ref{patterns}.

\section{юДДХРХБМШЕ ОПНАКЕЛШ } \label{additiveproblem}

\subsection{цХОНРЕГЮ Н ОПНЯРШУ ВХЯКЮУ-АКХГМЕЖЮУ}\label{twinprimeconj}

\begin{proposition} \label{twinprime_conj}
яСЫЕЯРБСЕР ТСМЙЖХЪ $H(m)$ РЮЙЮЪ, ВРН: 
\begin{enumerate}
	\item  ${\pi}_2 (6m)>mH(m)$ ДКЪ БЯЕУ ДНЯРЮРНВМН АНКЭЬХУ $m$;
	\item $mH(m) \to \infty $ ЙНЦДЮ $m \to~\infty$.
\end{enumerate}
\end{proposition}
хГ МЕПЮБЕМЯРБ
$$ \pi (x) > \frac {x} {\log x} \left( 1 + \frac{1} {2 \log x} \right), \  x \geqslant 59; $$  
$$ \frac{{e^{ - \gamma } }} {{\log x}} \left( 1 + \frac{1} {2 (\log x)^2} \right) > \prod\limits_{p \leqslant x} {\left( {1 - \frac{1} {p}} \right)}, \ x > 1,$$

(Rosser, Schoenfeld \cite{Rosser-Schoenfeld}) 
ЯКЕДСЕР, ВРН ДКЪ БЯЕУ ДНЯРЮРНВМН АНКЭЬХУ $x$ БШОНКМЪЕРЯЪ МЕПЮБЕМЯРБН
\begin{align*} 
  \pi (x) > x {e^{ \gamma }} \prod\limits_{ p \leqslant x} {\left( {1 - \frac{1} {p}} \right)}.  
 \end{align*} 
оЕПЕУНДЪ Й ОЕПЕЛЕММШЛ $n$ Х $m$ ОНКСВХЛ 
\begin{align*} 
n {e^{ \gamma }} \prod\limits_{ p \leqslant n} {\left( {1 - \frac{1} {p}} \right)} <  \pi (a,n) + \pi (b,n), \ n \geqslant 31;  
 \end{align*} 
\begin{align} \label{single_sieve1}
  m {e^{ \gamma }} \prod\limits_{5 \leqslant p \leqslant 6m} {\left( {1 - \frac{1} {p}} \right)}  < \pi (a, 6m) ,\ m \geqslant 2; 
\end{align} %
\begin{align} \label{single_sieve2}
  m {e^{ \gamma }} \prod\limits_{5 \leqslant p \leqslant 6m} {\left( {1 - \frac{1} {p}} \right)}  < \pi (b, 6m),\ m \geqslant 9. 
\end{align} %
оНКНФХЛ ЯСЫЕЯРБНБЮМХЕ ТСМЙЖХИ $\varphi_{a,m}$ Х $\varphi_{b,m}$ РЮЙХУ, ВРН $m \varphi_{a,m}   = \pi (a,6m),m \varphi_{b,m}   =  \pi (b,6m).$
рНЦДЮ ХГ (\ref{single_sieve1}) Х (\ref{single_sieve2})  ЯКЕДСЕР, ВРН ДКЪ БЯЕУ ДНЯРЮРНВМН АНКЭЬХУ $m$ БШОНКМЪЧРЯЪ МЕПЮБЕМЯРБЮ
\begin{equation*} 
{e^{ \gamma }} \prod\limits_{5 \leqslant p \leqslant 6m} {\left( {1 - \frac{1} {p}} \right)} <  \frac{\pi (a,6m)}{m} = \varphi_{a,m} , \   
{e^{ \gamma }} \prod\limits_{5 \leqslant p \leqslant 6m} {\left( {1 - \frac{1} {p}} \right)} <  \frac{\pi (b,6m)}{m} =\varphi_{b,m}.  
\end{equation*}	
хГ РЕНПЕЛШ дХПХУКЕ Н ОПНЯРШУ ВХЯКЮУ Б ЮПХТЛЕРХВЕЯЙХУ ОПНЦПЕЯЯХЪУ ЯКЕДСЕР, ВРН 
$\varphi_{a,m} \sim  \varphi_{b,m} $ ЙНЦДЮ $m \to \infty$. 
дКЪ ОЕПЕУНДЮ Й ДБНИМНЛС ПЕЬЕРС ХЯОНКЭГСЕЛ НРМНЬЕМХЕ
\begin{align} \label{repited_twin}
{C_{(1 : 2)}(m)} &= {  \prod \limits_{5 \leqslant p \leqslant 6m}} \left( {1 - \frac{2}{p}} \right) \Big{/}
{  \prod \limits_{5 \leqslant p \leqslant 6m}} \left( {1 - \frac{1}{p}} \right)^2  \\
	&= { \prod \limits_{5 \leqslant p \leqslant 6m}} \frac{p(p-2)}{(p-1)^2} \nonumber \\
	&= \frac {4} {3} { \prod \limits_{3 \leqslant p \leqslant 6m}} \frac{p(p-2)}{(p-1)^2}. \nonumber 
	\end{align}
рНЦДЮ ХГ (\ref{repited_twin}), ХЛЕЪ Б БХДС МЕПЮБЕМЯРБa (\ref{single_sieve1}) Х (\ref{single_sieve2}), ОНКСВХЛ 
\begin{align}\label{H2}
H(m) &= { {e^{2 \gamma }} \prod \limits_{5 \leqslant p \leqslant 6m}} \left( {1 - \frac{2}{p}} \right)  \nonumber\\
     &= {  {C_{(1:2)} (m) }  {e^{2 \gamma }}  \prod \limits_{5 \leqslant p \leqslant 6m}} \left( {1 - \frac{1}{p}} \right)^2.
\end{align} 
мЕПЮБЕМЯРБН ${\pi}_2 (6m) > m H(m)$ БШОНКМЪЕРЯЪ ДКЪ БЯЕУ $m>5$ Х СДНБКЕРБНПЪЕР СЯКНБХЪЛ оПЕДКНФЕМХЪ \ref{twinprime_conj}.
дНЙЮГЮРЕКЭЯРБН БРНПНЦН СРБЕПФДЕМХЪ ЩРНЦН Х БЯЕУ ОНЯКЕДСЧЫХУ оПЕДКНФЕМХИ ОПХБНДХРЯЪ Б ЙНМЖЕ ЦКЮБШ.
тСМЙЖХЪ $mH(m)$ МЕ ЪБКЪЕРЯЪ УНПНЬХЛ ОПХАКХФЕМХЕЛ ВХЯКЮ ОЮП ОПНЯРШУ-АКХГМЕЖНБ: ${\pi}_2 (6m) - mH(m) > 1251$ СФЕ ОПХ $6m=10^6$ Б РН БПЕЛЪ, ЙЮЙ ${\pi}_2 (10^6)  = 8168$.

\subsubsection{оЕПБЮЪ ЦХОНРЕГЮ уЮПДХ-кХРРКБСДЮ.} 

еЯКХ ХГБЕЯРМШ ГМЮВЕМХЪ $\pi (a,6m), \pi (b,6m)$, РН ХГ (\ref{H2}) АСДЕЛ ХЛЕРЭ
\begin{align} \label{myfirstHLconj}
\pi'_2(6m) = C_{(1 : 2)}  (m) \cdot m  \frac {\pi (a,6m) } {m} \cdot \frac { \pi (b,6m)} {m}.   
\end{align}
оНДЯРЮМНБЙЮ $m=n/6$, ${\pi (a, 6m) \cdot \pi (b, 6m)} \sim (\pi(n) /2)^2$ Х $C_{(1:2)}  (m) = 4C_2 /3$, ЦДЕ $C_2 $ - ОНЯРНЪММЮЪ ОПНЯРШУ-АКХГМЕЖНБ, ДЮЕР 
\begin{align*} 
  \pi'_2(n) = 
	2 n C_2 \left[ \frac {\pi(n)} {n} \right]^2 
	\sim  2 C_2  \frac {n} {(\log n)^2}, 
	\end{align*}
Х ЯКЕДНБЮРЕКЭМН, (\ref{myfirstHLconj}) ЩЙБХБЮКЕМРМН ОЕПБНИ ЦХОНРЕГЕ уЮПДХ-кХРРКБСДЮ (ЯЛ. МЮОПХЛЕП \cite {Hardy-Littlwood_first_conj}). 
б ЩРНЛ ЯКСВЮЕ ${\pi}_2 (10^6) - \pi'_2(10^6) = 32.5356\ldots$ 
кСВЬЕЕ ОПХАКХФЕМХЕ ДКЪ ЙНКХВЕЯРБЮ ОЮП ОПНЯРШУ-АКХГМЕЖНБ БШПЮФЮЕРЯЪ ВЕПЕГ ОКНРМНЯРЭ ПЮЯОПЕДЕКЕМХЪ ОПНЯРШУ ВХЯЕК:
\begin{align} \label{firstHLconj}
  \pi_2(n) \sim 2 C_2 \int^n_2  \frac {dt} {(\log t)^2}.
\end{align}
сВХРШБЮЪ, ЙЮЙ ОНЙЮГЮМН Б ПЮГДЕКЕ \ref{primes_with_fixed_gap}, ВРН ОКНРМНЯРЭ ОПНЯРШУ ВХЯЕК МЮ НДМНЛ ХГ ЯСЛЛХПСЕЛШУ НРПЕГЙНБ ХГЛЕМЪЕРЯЪ НР $1/ \log 2$ ДН $1/ \log n$,
Ю МЮ ДПСЦНЛ НР $1/ \log (2 + \mathrm g)$ ДН $1/ \log (n + \mathrm g)$,
ДКЪ ВХЯКЮ ОЮП ОПНЯРШУ Я ПЮГМНЯРЭЧ ${\mathrm g}$ Х МЕ ОПЕБНЯУНДЪЫХУ $n + \mathrm g$, АСДЕЛ ХЛЕРЭ: 
\begin{align*} 
  \frac {{\pi}_{{\mathrm g}}(n)}{\eta_2 (\mathrm g)} 
		\sim 2 C_2 \int^n_2  \frac {dt} {\log t \log (t + \mathrm g) }.
\end{align*}

\subsection{цХОНРЕГЮ цНКЭДАЮУЮ - щИКЕПЮ}\label{Eul_Gol_conj}

\begin{proposition} 
яСЫЕЯРБСЕР ТСМЙЖХЪ $H'(m)$ РЮЙЮЪ, ВРН: 
\begin{enumerate}
	\item ДКЪ БЯЕУ ДНЯРЮРНВМН АНКЭЬХУ $m$ ВХЯКН ОПЕДЯРЮБКЕМХИ Б БХДЕ ЯСЛЛШ ДБСУ ОПНЯРШУ ЙЮФДНЦН ХГ РПЕУ ОНЯКЕДНБЮРЕКЭМШУ ВЕРМШУ ВХЯЕК $\mathrm g_m^1$, $\mathrm g_m^2$, $\mathrm g_m^3$ МЕ ЛЕМЭЬЕ $mH'(m)$;
	\item $mH'(m) \to \infty $ ЙНЦДЮ $m \to~\infty $.
\end{enumerate}
\end{proposition}%

бШПЮФЕМХЕ (\ref{firstHLconj}) НОПЕДЕКЪЕР ${\pi}_{\mathrm g} (n)$ ВЕПЕГ ОКНРМНЯРЭ ПЮЯОПЕДЕКЕМХЪ ОПНЯРШУ ВХЯЕК Б МЮРСПЮКЭМНЛ ПЪДЕ Я ХГЛЕМЕМХЕЛ ОКНРМНЯРХ Б ЙЮФДНЛ ХГ ЯКЮЦЮЕЛШУ НРПЕГЙНБ НР $1/ \log 2$ ДН $1/ \log n$.
оПХ ОНЯРПНЕМХЕ НРПЕГЙНБ ДКЪ НОПЕДЕКЕМХЪ ${\pi^+} ({\mathrm g})$ ЯЙКЮДШБЮЧРЯЪ НРПЕГЙХ, ОКНРМНЯРЭ ПЮЯОПЕДЕКЕМХЪ ОПНЯРШУ Б НДМНЛ ХГ ЙНРНПШУ СЛЕМЭЬЮЕРЯЪ, Ю Б ДПСЦНЛ ЯКЮЦЮЕЛНЛ НРПЕГЙЕ СБЕКХВХБЮЕРЯЪ. 
рЮЙХЛ НАПЮГНЛ, ЮМЮКНЦХВМН (\ref{firstHLconj}), ДКЪ НЖЕМЙХ ВХЯКЮ ОПЕДЯРЮБКЕМХИ ВЕРМНЦН ВХЯКЮ $\mathrm g$ Б БХДЕ ЯСЛЛШ ДБСУ ОПНЯРШУ ОНКСВХЛ: 
\begin{align*} 
  \frac {\pi^+ (\mathrm g)} {\eta_2 (\mathrm g)}  \sim 2 C_2 \int^{n-2}_2  \frac {dt} {\log t \log(n-t)},
\end{align*}
ЦДЕ ДКЪ ДНЯРЮРНВМН АНКЭЬХУ $\mathrm g$ ЛНФМН ОНКЮЦЮРЭ $n= \mathrm g$. гДЕЯЭ МСФМН ЯДЕКЮРЭ ЯННРБЕРЯРБСЧЫХИ БШАНП ГМЮВЕМХИ $\kappa$ Б ${\eta_2 (\mathrm g)}$. 
нРМНЬЕМХЕ
\begin{align*} 
  \mu_2 (n) &=   {\sum^{n-2}_2  \frac {1} {\log t \log(n-t)}} \Big{/}  {\sum^{n-2}_2  \frac {1} {( \log t)^2}} \\
  &\approx    {\int^{n-2}_2  \frac {dt} {\log t \log(n-t)}}  \Big{/} {\int^{n-2}_2  \frac {dt} {( \log t)^2}}, 
\end{align*}
ХЛЕЕР ЛХМХЛЮКЭМНЕ ГМЮВЕМХЕ $0,706\ldots$ ОПХ $n=32$, ДНЯРХЦЮЕР ГМЮВЕМХЪ $0,972\ldots$ ОПХ $n=10^5$. 
рЮЙХЛ НАПЮГНЛ, оПЕДКНФЕМХЧ 2 СДНБКЕРБНПЪЕР ТСМЙЖХЪ $H'(m) = \mu_2 (6m) H(m)$.
нЙНМВЮРЕКЭМН, ОПХ $\mathrm g \to~\infty $ АСДЕЛ ХЛЕРЭ 
$\pi^+(\mathrm g)  \sim \mu_2 (\mathrm g)  \eta_2 (\mathrm g)  {\pi}_2 {(\mathrm g)} $.
 
\subsection{сОНПЪДНВЕММШЕ МЮАНПШ ОПНЯРШУ ВХЯЕК (ЙНПРЕФХ) } \label{tuple}

пЮЯЯЛНРПХЛ ЙНПРЕФХ МЕВЕРМШУ ОПНЯРШУ ВХЯЕК, ЯНЯРНЪЫХЕ ХГ ВЕРШПЕУ ЩКЕЛЕМРНБ $q(g_1=0,g_2,g_3,g_4)$.
бНГЛНФМШ РПХ ЯКСВЮЪ: 
\begin{enumerate}
	\item $g_2,g_3,g_4 \equiv 0,2 \pmod 6 \Rightarrow g_1 \in A$;
	\item $g_2,g_3,g_4 \equiv 0,4 \pmod 6 \Rightarrow g_1 \in B$;
	\item $g_2 \equiv g_3 \equiv g_4 \equiv 0 \pmod 6 \Rightarrow g_1,g_2,g_3,g_4 \in A$ ХКХ $g_1,g_2,g_3,g_4 \in B$.
\end{enumerate}
дКЪ ОНЯРПНЕМХЪ ЙНМЯРПСЙЖХХ, ДЮЧЫЕИ ПЕЬЕМХЕ ГЮДЮВХ МЮУНФДЕМХЪ ЙНПРЕФЕИ $q(0,g_2,g_3,g_4)$, МЕНАУНДХЛН ЯКНФХРЭ ВЕРШПЕ ОНЯКЕДНБЮРЕКЭМНЯРХ БХДЮ ${\mathcal A}^{m'}$ 
Х ${\mathcal B}^{m'}$. 
йЮФДЮЪ ОНЯКЕДНБЮРЕКЭМНЯРЭ ЯННРБЕРЯРБСЕР ЙКЮЯЯС НДМНЦН ХГ ЩКЕЛЕМРНБ, ОПХ ЩРНЛ 

ЕЯКХ $g_1 \in A$, РН $m' = g/6$ ДКЪ $g \in A$ Х $m' = (g-2)/6$ ДКЪ $g \in B$; 

ЕЯКХ $g_1 \in B$, РН $m' = (g+2)/6$ ДКЪ $g \in A$ Х $m' = g/6$ ДКЪ $g \in B$.\\
мЮОПХЛЕП, ДКЪ $q(0,2,12,24)$ ОНКСВХЛ 
\tabcolsep=0.078em
\begin{center}
\begin{tabular}{rrrrrrrrrrrrrrrrrrrrrrrrrrr}
 ${\mathcal A} =$ &{5,}&{11,}&{17,}&{23,}&{29,}&{0,} &{41,}&{47,}&{53,}&{59,}&{0,} &{71,}&{0,}  &{83,}&{89,} &{0,}  &{101,}&{107,}&{113,}&{0,}  &{0,\ldots}\\ 
 ${\mathcal B} =$ & {7,}&{13,}&{19,}&{0,}&{31,}&{37,}&{43,}&{0,} &{0,} &{61,}&{67,}&{73,}&{79,} &{0,} &{0,}  &{97,} &{103,}&{109,}&{0,}  & {0,} &{127,\ldots} \\  
${\mathcal A}^2=$ &{17,}&{23,}&{29,}&{0,}&{41,}&{47,}&{53,}&{59,}&{0,} &{71,}&{0,} &{83,}&{89,} &{0,} &{101,}&{107,}&{113,}&{0,}  &{0,}  &{131,}&{137,\ldots} \\ 
 ${\mathcal A^4}=$ &{29,}&{0,}&{41,}&{47,}&{53,}&{59,}&{0,} &{71,}&{0,} &{83,}&{89,}&{0,} &{101,}&{107,}&{113,}&{0,} &{0,}  &{131,}&{137,}&{0,}  &{149,\ldots}\\ 
	 \hline 
$  S(0,2,12,24) =  $ &{1,} &{0,} &{1,}&{0,} &{1,} &{0,} &{0,} &{0,} &{0,} &{1,} &{0,} &{0,} &{0,}  &{0,} &{0,}  &{0,}  &{0,} &{0,} &{0,}   &{0,}  &{0,}\ldots
\end{tabular}
\end{center}

оНЯКЕДНБЮРЕКЭМНЯРЭ $S(0,2,12,24)$ ЕЯРЭ ПЕЮКХГЮЖХЪ ВЕРШПЕУЙПЮРМНЦН ПЕЬЕРЮ. 
дКЪ КЧАНЦН $p \in \mathcal{P}$ Б МЕИ МЮИДСРЯЪ ВЕРШПЕ ПЮГКХВМШЕ (МЕЯНБОЮДЮЧЫХЕ) ОНДОНЯКЕДНБЮРЕКЭМНЯРХ МСКЕБШУ ВКЕМНБ, ХМДЕЙЯШ ЙНРНПШУ НАПЮГСЧР ЮПХТЛЕРХВЕЯЙХЕ ОПНЦПЕЯЯХХ Я ПЮГМНЯРЭЧ $p$ . 
хЯЙКЧВЕМХЪЛХ ЪБКЪЧРЯЪ ОНДОНЯКЕДНБЮРЕКЭМНЯРХ Я ПЮГМНЯРЪЛХ $p=5$ Х $p=11$, РЮЙ ЙЮЙ $5 \mid g_3 - g_2$ Х $11 \mid g_4 - g_2$.
вХЯКН ЙЮФДНИ ХГ ЩРХУ ОНДОНЯКЕДНБЮРЕКЭМНЯРЕИ Б ОНЯКЕДНБЮРЕКЭМНЯРХ $S(0,2,12,24)$ ПЮБМН $3$.

\subsubsection{йНПРЕФХ, ЯНДЕПФЮЫХЕ ДБЕ ОЮПШ АКХГМЕЖНБ } \label{patterns}

оСЯРЭ ${\Pi}_{m'}(n)$ - ЙНКХВЕЯРБН ОЮП АКХГМЕЖНБ $p, p+2$ МЕ ОПЕБНЯУНДЪЫХУ $n$ Х РЮЙХУ, ВРН $p' = p +6m', p'+2 $ РЮЙФЕ ЪБКЪЕРЯЪ ОЮПНИ АКХГМЕЖНБ.
рЮЙЮЪ ВЕРБЕПЙЮ ЕЯРЭ ЙНПРЕФ $q(0,2,6m',2)$. 
йНМЯРПСЙЖХЕИ ДКЪ НОПЕДЕКЕМХЪ ВХЯКЮ РЮЙХУ ЙНПРЕФЕИ АСДЕР 
\begin{align} 
{\Pi}_{m'} (6m) &= \#  \{ {\mathcal T}^{m'}_{m}  - {\mathcal T}_{m} \} \nonumber \\
                &= \# \{  ({\mathcal R^{m'}} - {\mathcal L^{m'}}) - ({\mathcal R} - {\mathcal L}) \} \nonumber \\
                &=  \# \{  ({\mathcal B^{m'}} - {\mathcal A^{m'}}) - ({\mathcal B} - {\mathcal A}) \}.  \label{Q}
\end{align} 
оПХ $m' =1$ ОНКСВХЛ ВХЯКН ЙБЮДПСОКЕРНБ ${\Pi}_1 (6m)$. 

\begin{proposition} \label{patterns1} 
яСЫЕЯРБСЕР ТСМЙЖХЪ $Q(m)$ РЮЙЮЪ, ВРН: 
\begin{enumerate}
	\item ДКЪ БЯЕУ ДНЯРЮРНВМН АНКЭЬХУ $m$ БШОНКМЪЕРЯЪ МЕПЮБЕМЯРБН ${\Pi}_1 (6m)> mQ(m)$;
	\item $mQ(m) \to \infty $ ЙНЦДЮ $m \to \infty $.
\end{enumerate}
\end{proposition}
йЮЙ АШКН ГЮЛЕВЕМН Б ПЮГДЕКЕ \ref{general_sieving}, Б ЙНМЯРПСЙЖХХ (\ref{Q}) ЯСЛЛХПСЧРЯЪ $k=4$ НРПЕГЙЮ УНПНЬН ЯРПСЙРСПХПНБЮММШУ ОНЯКЕДНБЮРЕКЭМНЯРЕИ, ОПНЯЕЪММШУ ОН БЯЕЛ $p \in \mathcal P$. 
оПХ $m' =1$ ДКЪ БЯЕУ $p \in \mathcal P$ 
\begin{align*}
	\left|   \left\{  i \colon i \leqslant m, (p, a_i \cdot b_i \cdot a_{i+1} \cdot b_{i+1} ) =1  \right\}  \right|  \sim m \left( 1 - 4/p \right), 
		\end{align*}  
РН ЕЯРЭ ПЕГСКЭРЮРНЛ ЯКНФЕМХЪ ВЕРШПЕУ ОНЯКЕДНБЮРЕКЭМНЯРЕИ $ \mathcal A, \mathcal B, \mathcal A^1$ Х $\mathcal B^1$ ЪБКЪЕРЯЪ ОНЯКЕДНБЮРЕКЭМНЯРЭ, ЦДЕ ДКЪ ЙЮФДНЦН 
$p \in \mathcal P$ ХЛЕЕРЯЪ ПНБМН ВЕРШПЕ ПЮГКХВМШЕ (МЕЯНБОЮДЮЧЫХЕ) ОНДОНЯКЕДНБЮРЕКЭМНЯРХ МСКЕБШУ ВКЕМНБ, ХМДЕЙЯШ ЙНРНПШУ НАПЮГСЧР ЮПХТЛЕРХВЕЯЙХЕ ОПНЦПЕЯЯХХ Я 
ПЮГМНЯРЭЧ $p$.  
хЯОНКЭГСЪ ЛЕРНД, ЮМЮКНЦХВМШИ ОПХЛЕМЕММНЛС Б ПЮГДЕКЕ \textbf{\ref{twinprimeconj}}, ДКЪ ОЕПЕУНДЮ НР НДМНЙПЮРМНЦН Х ДБНИМНЦН ПЕЬЕРЮ Й ВЕРШПЕУЙПЮРМНЛС ПЕЬЕРС ЯННРБЕРЯРБЕММН ХЯОНКЭГСЕЛ НРМНЬЕМХЪ  
 \begin{align} 
  C_{(1:4)} (m) &={\prod \limits_{5 \leqslant p \leqslant 6m}} \left( 1- \frac{4}{p}\right) \Big{/} 
	             {\prod \limits_{5 \leqslant p \leqslant 6m}} \left( 1- \frac{1}{p} \right)^4 \label{Q_41}\\
	&= { \prod \limits_{5 \leqslant p \leqslant 6m}}  \frac{(p-4)p^3}{(p-1)^4}, \nonumber\\
  C_{(2:4)} (m) &={\prod \limits_{5 \leqslant p \leqslant 6m}} \left( 1- \frac{4}{p}\right) \Big{/}
	           {\prod \limits_{5 \leqslant p \leqslant 6m}} \left[ \left( 1- \frac{2}{p} \right) ^2 \right]^2 \label{Q_42}\\
						&= { \prod \limits_{5 \leqslant p \leqslant 6m}}  \frac{(p-4)p}{(p-2)^2}.\nonumber
\end{align} %
рНЦДЮ ХГ (\ref{Q_41}) Х (\ref{Q_42}), ХЛЕЪ Б БХДС МЕПЮБЕМЯРБa (\ref{single_sieve1}), (\ref{single_sieve2}) Х (\ref{H2}), ОНКСВХЛ
\begin{align*} 
  Q(m) &= { {e^{4 \gamma }} \prod \limits_{5 \leqslant p \leqslant 6m}} \left( {1 - \frac{4}{p}} \right) \\ 
	     &= C_{(1:4)} (m) {e^{4 \gamma }}{\prod \limits_{5 \leqslant p \leqslant 6m}} \left( 1- \frac{1}{p} \right)^4 \\
	     &= C_{(2:4)} (m) {e^{4 \gamma }}{\prod \limits_{5 \leqslant p \leqslant 6m}} \left[ \left( 1- \frac{2}{p} \right) ^2 \right]^2
\end{align*}  
мЕПЮБЕМЯРБН ${\Pi}_1 (6m)> mQ(m)$ БШОНКМЪЕРЯЪ ДКЪ БЯЕУ $m>1$. оПХ $6m=10^6$ ХЛЕЕЛ ${\Pi}_1 (6m)= 166$ Х ${\Pi}_1 (6m)- mQ(m) \approx 52.07$.
кСВЬСЧ НЖЕМЙС ДКЪ ${\Pi}_1 (6m)$ ЛНФМН ОНКСВХРЭ ВЕПЕГ ВХЯКН ОПНЯРШУ ВХЯЕК ХКХ ВХЯКН ОЮП ОПНЯРШУ-АКХГМЕЖНБ 
ХЯОНКЭГСЪ РЕУМХЙС, ОПХЛЕМЕММСЧ Б О. \textbf{\ref{twinprimeconj}}: 
\begin{align} 
   \Pi_1(6m) &\sim  m  C_{(1:4)} (m)\left[ \frac {\pi (a, 6m) } {m} \cdot \frac { \pi (b, 6m)} {m} \right]^2 \label{qoud_prime}  \\
    &\sim  m {C_{(2:4)} (m)} \left[ \frac {\pi_2(6m)} {m} \right]^2. \label{qoud_twin}
\end{align}
ЦДЕ $ \lim_{m \to + \infty} C_{(1:4)} (m) = 0,3074950$.
оПХ $n=10^6$,  ТНПЛСКЮ (\ref{qoud_prime}) ДЮЕР НРЙКНМЕМХЕ НР ХЯРХММНЦН ГМЮВЕМХЪ ВХЯКЮ ЙБЮДПСОКЕРНБ, ПЮБМНЕ $8.39\ldots$, Ю ТНПЛСКЮ (\ref{qoud_twin})  --- $7.12\ldots$

б НАЫЕЛ ЯКСВЮЕ 
${\Pi}_{m'} (6m) > \eta_4 (0,2,6m',2) {\Pi}_{1} (6m)$, ЦДЕ 
\begin{align*}  
    \eta_4 (0,2,6m',2)  
		= 	\frac {{\Pi}_{m'} (6m) } {{\Pi}_{1} (6m)} 
		=  \prod_{\substack { p \mid m'}, p \in  \mathcal P}   \left(\frac{p - 2} {p-4} \right) 
		\end{align*} 
вХЯКН ЙБЮДПСОКЕРНБ ${\Pi}_{1}(n)$, БШПЮФЕММНЕ ВЕПЕГ ОКНРМНЯРЭ ПЮЯОПЕДЕКЕМХЪ ОПНЯРШУ ВХЯЕК ОПХ $n \to \infty$ ЕЯРЭ
\begin{align*} 
  {\Pi}_{1}(n) &\sim  C_{(1:4)} (m) \int^n_2  \frac {dt} {\log t \log (t+2) \log (t+6) \log (t+8)} \\
	             &\sim  C_{(1:4)} (m) \int^n_2  \frac {dt} {(\log t)^4}.
\end{align*}
нАНАЫЕМХЕЛ оПЕДКНФЕМХЪ \ref{patterns1} ЪБКЪЕРЯЪ СРБЕПФДЕМХЕ Н ЯСЫЕЯРБНБЮМХХ АЕЯЙНМЕВМНЦН ЛМНФЕЯРБЮ КЧАШУ ЙНПРЕФЕИ, ЯНЯРНЪЫХУ ХГ ВЕРШПЕУ, МЕНАЪГЮРЕКЭМН ОНЯКЕДНБЮРЕКЭМШУ, ОПНЯРШУ ВХЯЕК. 
гЮЛЕРХЛ, ВРН $\eta_4 (g_1,g_2,g_3,g_4) \geqslant 1$ ДКЪ КЧАШУ ЙНПРЕФЕИ $q(g_1,g_2,g_3,g_4) $.

\subsection{сЯХКЕММЮЪ ЦХОНРЕГЮ цНКЭДАЮУЮ-щИКЕПЮ } \label{extGEconj}

оПХ ПЮЯЯЛНРПЕМХХ ЮДДХРХБМШУ ГЮДЮВ ЕЯРЕЯРБЕММШЛ НАПЮГНЛ БНГМХЙЮЕР БНОПНЯ: ЙЮЙ ЛМНЦН (ЙНМЕВМН КХ ВХЯКН) ВЕРМШУ ВХЯЕК, МЕ ОПЕДЯРЮБХЛШУ Б БХДЕ ЯСЛЛШ ДБСУ ОПНЯРШУ, ЙЮФДШИ ХГ ЙНРНПШУ ХЛЕЕР ЯБНЕЦН АКХГМЕЖЮ (Б ДЮКЭМЕИЬЕЛ - ДБСУ АКХГМЕЖНБ). нВЕБХДМН, ВРН ЕЯКХ НДМН ВХЯКН ХГ РПНИЙХ ВЕРМШУ ВХЯЕК $\mathrm g_m^1$, $\mathrm g_m^2$, $\mathrm g_m^3$ ОПЕДЯРЮБХЛН Б БХДЕ ЯСЛЛШ ДБСУ АКХГМЕЖНБ, РН ОПЕДЯРЮБХЛШ  Б РЮЙНЛ БХДЕ Х ДБЮ ДПСЦХУ. 
дБЕ ОЮПШ АКХГМЕЖНБ ДЮЧР ОН НДМНЛС ОПЕДЯРЮБКЕМХЧ ВХЯЕК  $\mathrm g_m^1$ Х $\mathrm g_m^3$ Х ДБЮ ОПЕДЯРЮБКЕМХЪ ВХЯКЮ $\mathrm g_m^2$ ДКЪ МЕЙНРНПНЦН $m$.
лШ АСДЕЛ ЯВХРЮРЭ РЮЙСЧ ЙНЛАХМЮЖХЧ ГЮ НДМН ОПЕДЯРЮБКЕМХЕ. рН ЕЯРЭ БЯЕ РПХ ВХЯКЮ ХКХ ОПЕДЯРЮБХЛШ Б БХДЕ ЯСЛЛШ ДБСУ АКХГМЕЖНБ, ХКХ МЕ ОПЕДЯРЮБХЛШ. 
мЕОНЯПЕДЯРБЕММЮЪ ОПНБЕПЙЮ АНКЕЕ ВЕЛ РПХДЖЮРХ РШЯЪВ РЮЙХУ РПНЕЙ ОНЙЮГЮКЮ, ВРН РНКЭЙН $12$ ХГ МХУ Б РЮЙНЛ БХДЕ МЕ ОПЕДЯРЮБХЛШ. 
\begin{proposition}{\textnormal{(сЯХКЕММЮЪ ЦХОНРЕГЮ цНКЭДАЮУЮ-щИКЕПЮ)} } \label{extGEconj1}
{бЯЕ ВЕРМШЕ ВХЯКЮ ЙПНЛЕ ВХЯЕК БХДЮ $\mathrm g_m^1$, $\mathrm g_m^2$, $\mathrm g_m^3$ ОПХ $m= 1,16,67, 86, 131, 151, 186, $ $ 191, 211, 226, 541, 701$, ХЛЕЧР ОПЕДЯРЮБКЕМХЪ Б БХДЕ ЯСЛЛШ ДБСУ АКХГМЕЖНБ. йПНЛЕ РНЦН, ЯСЫЕЯРБСЕР ТСМЙЖХЪ $Q'(m)$ РЮЙЮЪ, ВРН: 
\begin{enumerate}
	\item ДКЪ БЯЕУ ДНЯРЮРНВМН АНКЭЬХУ $m$ ЙНКХВЕЯРБН ЩРХУ ОПЕДЯРЮБКЕМХИ МЕ ЛЕМЭЬЕ $mQ'(m)$;
	\item $mQ'(m) \to \infty $ ЙНЦДЮ $m \to~\infty $.
\end{enumerate}}
\end{proposition}%
йНМЯРПСЙЖХЧ ДКЪ НОПЕДЕКЕМХЪ ЙНКХВЕЯРБЮ РЮЙХУ ОПЕДЯРЮБКЕМХИ ЛНФМН БШПЮГХРЭ БН ББЕДЕММШУ НАНГМЮВЕМХЪУ ЙЮЙ 
	  $\left( {{\mathcal T}_{m} + {\mathcal T}'_{m} } \right) = 
	  \Bigl(  \left({ {\mathcal R}_{m}  - {\mathcal L}_{m} } \right)  +   \left( { {\mathcal R}'_{m}  - {\mathcal L}'_{m} }  \right) \Bigr)$.
оПНДНКФЮЪ ПЮЯЯСФДЕМХЪ О.{\ref{Eul_Gol_conj}}, ОНКСВХЛ НЖЕМЙС ДКЪ $mQ'(m)$: 
\begin{align*}  \label{firsHLtconjforquad2}
 Q'(m)  <  {\eta_4 (m) \cdot \mu_4 (m) \cdot C_{(1:4)} (m)}  \left[ \frac {\pi(6m)} {m} \right]^4,
 \end{align*} 
ЦДЕ
\begin{align*} 
  \mu_4 (n) &=   {\sum^{n-2}_2  \frac {1} {(\log t)^2 (\log(n-t))^2}} \Big{/}   {\sum^{n-2}_2  \frac {1} {(\log t)^4}} \\
	&\approx   {\int^{n-2}_2  \frac {dt} {(\log t)^2 (\log(n-t))^2}}  \Big{/}   {\int^{n-2}_2  \frac {dt} {(\log t)^4 }} 
\end{align*}
ХЛЕЕР ЛХМХЛЮКЭМНЕ ГМЮВЕМХЕ $0,136278\ldots$ ОПХ $n=227$ Х  
ДНЯРХЦЮЕР ГМЮВЕМХЪ $0,64123\ldots$ ОПХ $n=2 \cdot 10^5$. рЮЙХЛ НАПЮГНЛ, ЛНФМН ОПХМЪРЭ $Q'(m) = \mu_4 (6m) Q(m) $. 
мЕПЮБЕМЯРБН $Q'(m) >1$ БШОНКМЪЕРЯЪ ДКЪ БЯЕУ $m \geqslant 947 \geqslant 701$ НРЙСДЮ ЯКЕДСЕР, ВРН БЯЕ ВЕРМШЕ ВХЯКЮ, АНКЭЬХЕ ВЕЛ $4208 = 6 \times 701 + 2$, ОПЕДЯРЮБХЛШ Б БХДЕ ЯСЛЛШ ДБСУ АКХГМЕЖНБ. \\
оПЕДКНФЕМХЕ \ref{extGEconj1} ЛНФЕР АШРЭ НАНАЫЕМН МЮ ЛМНФЕЯРБН ОЮП ОПНЯРШУ Я ТХЙЯХПНБЮММНИ ПЮГМНЯРЭЧ $d$ ЕЯКХ $3 \nmid d$.

яОПЮБЕДКХБНЯРЭ БРНПНЦН ОСМЙРЮ ЙЮФДНЦН ХГ оПЕДКНФЕМХИ ЯКЕДСЕР ХГ ОПХБЕДЕММНИ МХФЕ рЕНПЕЛШ.
\begin{theorem}\label{main_theoremj}
еЯКХ $m \to~\infty $, РН
\begin{align*} 
   { m  \prod \limits_{5 \leqslant p \leqslant 6m}} \left( {1 - \frac{4}{p}} \right) \to~\infty.
\end{align*} 
\end{theorem}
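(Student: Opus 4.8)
The plan is to reduce the statement to Mertens' third theorem by peeling off from each factor $1-4/p$ its ``main term'' $(1-1/p)^{4}$. First I would use the algebraic identity, valid for every prime $p\geqslant 5$,
\begin{align*}
  1-\frac{4}{p}=\left(1-\frac{1}{p}\right)^{4}\cdot\frac{p^{3}(p-4)}{(p-1)^{4}},
\end{align*}
which turns the product in question into
\begin{align*}
  \prod_{5\leqslant p\leqslant 6m}\left(1-\frac{4}{p}\right)
  =\left(\,\prod_{5\leqslant p\leqslant 6m}\left(1-\frac{1}{p}\right)\right)^{4}
  \cdot\prod_{5\leqslant p\leqslant 6m}\frac{p^{3}(p-4)}{(p-1)^{4}}.
\end{align*}

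Next I would control the two factors separately. For the correction product, expanding gives
\begin{align*}
  \frac{p^{3}(p-4)}{(p-1)^{4}}=1-\frac{6}{p^{2}}+O\!\left(\frac{1}{p^{3}}\right),
\end{align*}
and since each factor is strictly positive for $p\geqslant 5$ and $\sum_{p}1/p^{2}<\infty$, the infinite product $\prod_{p\geqslant 5}\frac{p^{3}(p-4)}{(p-1)^{4}}$ converges to a constant $c_{4}>0$; this is the same absolutely convergent Euler correction that underlies $C_{1:4}$ in \S\ref{patterns}. For the remaining factor I would invoke Mertens' third theorem in the form already used in \S\ref{twinprimeconj} (Ingham), $\prod_{p\leqslant x}(1-1/p)\sim e^{-\gamma}/\log x$; discarding the primes $2$ and $3$ multiplies the product by $3$, so
\begin{align*}
  \prod_{5\leqslant p\leqslant 6m}\left(1-\frac{1}{p}\right)\sim\frac{3e^{-\gamma}}{\log 6m},
  \qquad
  \left(\,\prod_{5\leqslant p\leqslant 6m}\left(1-\frac{1}{p}\right)\right)^{4}\sim\frac{81\,e^{-4\gamma}}{(\log 6m)^{4}}.
\end{align*}

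Combining the two estimates yields $\prod_{5\leqslant p\leqslant 6m}(1-4/p)\sim 81\,c_{4}\,e^{-4\gamma}\,(\log 6m)^{-4}$, whence
\begin{align*}
  m\prod_{5\leqslant p\leqslant 6m}\left(1-\frac{4}{p}\right)\sim 81\,c_{4}\,e^{-4\gamma}\cdot\frac{m}{(\log 6m)^{4}}\longrightarrow\infty,
\end{align*}
since $m/(\log 6m)^{4}\to\infty$. The computation is entirely routine; the only step needing a little care is the convergence of the Euler correction to a \emph{strictly positive} limit, i.e. that $\sum_{p\geqslant 5}\log\frac{p^{3}(p-4)}{(p-1)^{4}}$ converges, which is immediate from the $O(1/p^{2})$ bound above together with the fact that $p-4>0$ for $p\geqslant 5$. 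If only the bare divergence (and not the sharp constant) is wanted, one may bypass the asymptotics entirely: using $\log(1-4/p)=-4/p+O(1/p^{2})$ for $p\geqslant 5$ and Mertens' second theorem $\sum_{5\leqslant p\leqslant x}1/p=\log\log x+O(1)$ gives $\log\prod_{5\leqslant p\leqslant x}(1-4/p)=-4\log\log x+O(1)$, so $\prod_{5\leqslant p\leqslant x}(1-4/p)\gg(\log x)^{-4}$, which already dominates the factor $m$. The sharper asymptotic is preferable, though, as it is consistent with the constants $C_{1:4}$, $C_{2:4}$ recorded in \S\ref{patterns}.
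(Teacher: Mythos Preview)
Your argument is correct and lands at the same point as the paper's: both show $\prod_{5\leqslant p\leqslant 6m}(1-4/p)\sim c/(\log 6m)^{4}$ for some positive constant $c$ and then invoke $m/(\log 6m)^{4}\to\infty$. The paper simply quotes this asymptotic as a ready-made estimate from Rosser--Schoenfeld, in the general form $\prod_{\alpha<p\leqslant x}(1-\alpha/p)\cong c(\alpha)/(\log x)^{\alpha}$, whereas you derive the case $\alpha=4$ by hand, splitting off $(1-1/p)^{4}$ and applying Mertens' third theorem together with the absolutely convergent correction product $\prod_{p\geqslant 5}p^{3}(p-4)/(p-1)^{4}$. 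Your route is therefore a touch more self-contained---it avoids the external citation and makes the constant $C_{1:4}$ of \S\ref{patterns} appear naturally---while the paper's is shorter by outsourcing the estimate; structurally the two proofs are the same.
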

\begin{proof} 
хГ ЮЯХЛОРНРХВЕЯЙНЦН ПЮБЕМЯРБЮ (Rosser, Schoenfeld, \cite{Rosser-Schoenfeld})
\begin{align*}
   \prod \limits_{\alpha < p \leqslant x} \left( 1 - \frac {\alpha} {p} \right) 
   \cong \frac {c(\alpha)} {(\log x)^\alpha}
\end{align*}
ЛШ ХЛЕЕЛ 
\begin{align*}
 I_m = m \prod \limits_{5 \leqslant p \leqslant 6m} \left( 1 - \frac {4} {p} \right) 
   \cong    \frac { c(4) \cdot   m } { (\log 6m)^4},
\end{align*}
ЦДЕ $c(4) > 2.47 $.
дНЙЮГЮРЕКЭЯРБН ЯКЕДСЕР ХГ ХГБЕЯРМНЦН НРМНЬЕМХЪ $ \lim_{x \to + \infty} x^{-n} \log{x} = 0$, ЯОПЮБЕДКХБНЦН ДКЪ БЯЕУ $n>0$.
(мЮОПХЛЕП, Hardy \cite {MainInneq}, ЯРП. 381.) 

\end{proof}

\section{оКНРМНЯРХ ОНЯКЕДНБЮРЕКЭМНЯРЕИ} 

оСЯРЭ ЙЮЙ НАШВМН, МЮРСПЮКЭМЮЪ (ЮЯХЛОРНРХВЕЯЙЮЪ) ОКНРМНЯРЭ $\delta \left( {S} \right)$ Х ОКНРМНЯРЭ ОН ьМХПЕКЭЛЮМС $d \left( {S} \right)$ ОНЯКЕДНБЮРЕКЭМНЯРХ  $S_m$ НОПЕДЕКЪЧРЯЪ ЙЮЙ
\begin{align*}
 \delta \left( {S} \right)  =  \lim_{m \to \infty}  \frac { \left|S_{(m)}\right|} {m}, \, 
        d \left( {S} \right)  =  \liminf_{m \to \infty}  \frac { \left| S_{(m)} \right|} {m}. 
\end{align*}
нОПЕДЕКХЛ ЯСЛЛС $S' \oplus S''$ ДБСУ ОНЯКЕДНБЮРЕКЭМНЯРЕИ ЙЮЙ ЛМНФЕЯРБН ЖЕКШУ ВХЯЕК БХДЮ $s'$, $s''$ ХКХ $s' + s''$.
нВЕБХДМН, ВРН $\delta ( P ) = d ( P ) =0$, 
$\delta \left( \mathcal L \right) = \delta \left( \mathcal R \right) = \delta \left( \mathcal T \right)  = 0$ Х
$d \left( \mathcal {L} \right) = d \left( \mathcal {R} \right) = d \left( \mathcal {T} \right) = 0$. 
кЕЦЙН БХДЕРЭ, ВРН ХГ ЯОПЮБЕДКХБНЯРХ ЦХОНРЕГШ цНКЭДАЮУЮ-щИКЕПЮ ЯКЕДСЕР, ВРН
$\delta \left( \emph {P} \oplus \emph {P} \right) > 0.5$.    
лЕРНД, ОПХЛЕМЕММШИ ДКЪ ДНЙЮГЮРЕКЭЯРБЮ ЩРНИ ЦХОНРЕГШ ОНГБНКЪЕР СРБЕПФДЮРЭ, ВРН
$$ d \left( {{\mathcal L} \oplus {\mathcal L}} \right) = 
d \left( {{\mathcal L} \oplus {\mathcal R}} \right) = d \left( {{\mathcal R} \oplus {\mathcal R}} \right)=1, $$
РН ЕЯРЭ ЯСЫЕЯРБСЧР МЮРСПЮКЭМШЕ АЮГХЯШ ОНПЪДЙЮ $2$ МСКЕБНИ ОКНРМНЯРХ (гЮЛЕВЮМХЕ \ref{rem_on_density}). 
хГ ЯОПЮБЕДКХБНЯРХ СЯХКЕММНИ ЦХОНРЕГШ цНКЭДАЮУЮ-щИКЕПЮ РЮЙФЕ ЯКЕДСЕР, ВРН
$ \delta \left( {{\mathcal T} \oplus {\mathcal T}} \right) = 1 $,
РН ЕЯРЭ ОНЯКЕДНБЮРЕКЭМНЯРЭ ${\mathcal T}$ ЪБКЪЕРЯЪ ЮЯХЛОРНРХВЕЯЙХЛ АЮГХЯНЛ ОНПЪДЙЮ $2$. 

оНЯКЕДНБЮРЕКЭМНЯРЭ ОПНЯРШУ ВХЯЕК (Я БЙКЧВЕМХЕЛ $1$) ЪБКЪЕРЯЪ АЮГХЯНЛ ОНПЪДЙЮ $3$. 

оНЯКЕДНБЮРЕКЭМНЯРЭ ОПНЯРШУ-АКХГМЕЖНБ ЪБКЪЕРЯЪ ЮЯХЛОРНРХВЕЯЙХЛ АЮГХЯНЛ ОНПЪДЙЮ $3$.

\end{document}